\numberwithin{equation}{section}
\newtheorem{theorem}{Theorem}[section]
\newtheorem{lemma}[theorem]{{\bf Lemma}}
\newtheorem{coro}[theorem]{{\bf Corollary}}
\newtheorem{definition}[theorem]{Definition}
\newcommand{\al}{\alpha}
\newcommand{\Z}{\mbox{$\mathbb Z$}}
\begin{document}

	\title[ Arithmetic density and congruences of $t$-core partitions ]{ Arithmetic density and congruences of $t$-core partitions} 
	
	\author[N.K. Meher]{N.K. Meher}
	\address{Nabin Kumar Meher, Department of Mathematics, Indian Institute of Information Raichur, Govt. Engineering College Campus, Yermarus, Raichur, Karnataka, India 584135.}
	\email{mehernabin@gmail.com, nabinmeher@iiitr.ac.in}
	
	\author[ Ankita Jindal]{ Ankita Jindal}
	\address{Ankita Jindal, Indian Statistical Institute Bangalore, 8th Mile, Mysore Road, Bangalore, Karnataka, India 560059}
	\email{ankitajindal1203@gmail.com }

	\thanks{2010 Mathematics Subject Classification: Primary 05A17, 11P83, Secondary 11F11 \\
		Keywords: $t$-core partitions; Eta-quotients; Congruence; modular forms; arithmetic density.}
	\maketitle
	\pagenumbering{arabic}
	\pagestyle{headings}
	\begin{abstract}
		A partition of $n$ is called a $t$-core partition if none of its hook number is divisible by $t.$ In 2019, Hirschhorn and Sellers \cite{Hirs2019} obtained a parity result for $3$-core partition function $a_3(n)$. Recently, both authors \cite{MeherJindal2022} proved density results for $a_3(n)$, wherein we proved that $a_3(n)$ is almost always divisible by arbitrary power of $2$ and $3.$ In this article, we prove that for a non-negative integer $\alpha,$ $a_{3^{\alpha} m}(n)$ is almost always divisible by arbitrary power of $2$ and $3.$ Further, we prove that $a_{t}(n)$ is almost always divisible by arbitrary power of $p_i^j,$ where $j$  is a fixed positive integer and $t= p_1^{a_1}p_2^{a_2}\ldots p_m^{a_m}$ with primes $p_i \geq 5.$ Furthermore, by employing Radu and Seller's approach, we obtain an algorithm and we give alternate proofs of several congruences modulo $3$ and $5$ for $a_{p}(n)$, where $p$ is prime number. Our results also generalizes the results in  \cite{radu2011a}.
	\end{abstract}
	\maketitle
	
	\section{Introduction}
	A partition $\al=(\al_1,\al_2, \cdots, \al_{s})$ of $n$ is a non-increasing sequence of positive integers whose sum is $n$ and the positive integers $\al_i$ are called parts of the partitions. A partition $\al$ of $n$ can be represented by the Young diagram $[\al]$ (also known as the Ferrers graph) which consists of the $s$ number of rows such that the $i^{th}$ row has $\al_i$ number of dots $\bullet$ and all the rows start from the same column. An illustration of the Young diagram for $\al=(\al_1,\al_2, \cdots, \al_{s})$ is as follows.
	\begin{center}
		$[\al]$:=\begin{tabular}{lcll}
			$\bullet$ & $\bullet$& $\cdots$$\cdots$$\cdots$\hspace{0.3cm}$\bullet$ & $\al_1$ dots \\
			$\bullet$ & $\bullet$& $\cdots$$\cdots$\hspace{0.3cm}$\bullet$ & $\al_2$ dots \\
			& $\vdots$& & \vdots \\
			$\bullet$ &$\bullet$ & $\cdots$\hspace{0.3cm}$\bullet$ & $\al_s$ dots
		\end{tabular}
	\end{center}
	For $1\leq i \leq s$ and $1\leq j \leq \al_i$, the dot of $[\al]$ which lies in the $i^{th}$ row and $j^{th}$ column is denoted by $(i,j)^{th}$-dot of $[\al]$. Let $\al_j^{'}$ denotes the number of dots in the $j^{th}$ column. The hook number $H_{i,j}$ of $(i,j)^{th}$-dot is defined by $\al_i+\al_j^{'}-i-j+1$. In other words, $H_{i,j}=1+h_0$ where $h_0$ is the sum of the number of dots lying to the right of the $(i,j)^{th}$-dot in the $i^{th}$ row, and the number of dots lying below the $(i,j)^{th}$-dot in the $j^{th}$ column. Given a partition $\al$ of $n$, we say that it is a $t$-core partition if none of its hook numbers is divisible by $t$. 
	
	\noindent\textbf{Example.} The Young diagram of the partition $\al=(5,3,2)$ of $10$ is
	\begin{center}
		\begin{tabular}{ccccc}
			$\bullet^{7}$ & $\bullet^{6}$& $\bullet^{4}$ & $\bullet^{2}$ & $\bullet^{1}$ \\
			$\bullet^{4}$ & $\bullet^{3}$& $\bullet^{1}$ & & \\
			$\bullet^{2}$ & $\bullet^{1}$ & & &
		\end{tabular}
	\end{center}
	where the superscripts on each dot represents its hook number. It can be easily observed that $\alpha$ is a $t$-core partition of $10$ for $t=5$ and $t \geqslant 8$.
	
	For $n\geq 0$, let $a_t(n)$ denote the number of partitions of $n$ that are $t$-core partitions.  The generating function for $a_t(n)$ is given by
	\begin{equation}\label{1e1}
		\sum_{n=0}^{\infty} a_t(n) q^n = \prod\limits_{n=1}^{\infty} \frac{(1-q^{nt})^t}{(1-q^n)}= \frac{(q^t;q^t)^t_{\infty}}{(q;q)_{\infty}}
	\end{equation}
	where $(a;q)_{n}=(1-a)(1-aq)(1-aq^2)\cdots(1-aq^n)$ and $(a;q)_{\infty}=\lim \limits_{n \to \infty}(a;q)_{n}$. 
	
	In \cite[Corollary 1]{Garvan1990}, Garvan, Kim and Stanton proved the congruence
	\begin{align}\label{eq5711}
		a_p(p^jn-\delta_{p})\equiv 0\pmod {p^j}
	\end{align}
	where $p \in \{5,7,11\}$, $n$ and $j$ are positive integers and $\delta_{p}= \frac{p^2-1}{24}$. In \cite[Proposition 3]{Graville1996}, Granville and Ono proved similar congruences, namely
	\begin{align*}
		a_{5^j}(5^jn-\delta_{5,j})&\equiv 0\pmod {5^j},\\
		a_{7^j}(7^jn-\delta_{7,j})&\equiv 0\pmod {7^{\lfloor \frac j2 \rfloor +1}},\\
		a_{11^j}(11^jn-\delta_{11,j})&\equiv 0\pmod {11^j}
	\end{align*}
	where $n$ and $j$ are positive integers and $\delta_{p,j}\equiv \frac{1}{24}\pmod{p^j}$ for $p \in \{5,7,11\}$.
	
	Recently, Hirschhorn and Sellers \cite{Hirs2019} proved that, for all $n \geq 0$,
	\begin{align*}
		a_3(n)=\begin{cases}
			1\pmod 2	& \textrm{if }n= 3m^2+2m \textrm{ for some integer } m,\\
			0 \pmod 2	& \textrm{otherwise.}
		\end{cases}
	\end{align*}
	In \cite{MeherJindal2022}, the authors proved that the set $ \{ n \in \mathbb{N}: a_3(n) \equiv 0 \pmod{p^j}\}$ has arithmetic density $1$ for $p \in \{2,3\}$. In this article, we study the arithmetic densities of the partition function $a_t(n)$ modulo arbitrary powers of $2$ and $3$ when $t= 3^{\al}m$, where $\al\geq 0$, $m\geq 1$ are integers with $\gcd(m,6)=1$, and modulo arbitrary prime powers $p_i^j$ when  $t=p_1^{a_1}p_2^{a_2}\ldots p_m^{a_m},$ where $p_i \geq 5$
	are prime numbers. Precisely, we prove the following results.
	\begin{theorem}\label{mainthm1} Let $j\geq 1$, $\al\geq 0$ and $m\geq 1$ be integers with $\gcd(m,6)=1$. Then the set 
		$$ \{ n \in \mathbb{N}: a_{3^{\alpha}m}(n) \equiv 0 \pmod{2^j}\}$$ has arithmetic density $1$.
	\end{theorem}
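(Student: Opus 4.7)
My approach is to apply Serre's theorem on the arithmetic density of Fourier coefficients of a cusp form that are divisible by a prime power, via a congruence that identifies $\sum_n a_{3^\alpha m}(n) q^n$ modulo $2^j$ with the $q$-expansion of an eta-quotient. Write $t := 3^\alpha m$ and, for brevity, $f_k := (q^k;q^k)_\infty$.

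First I would establish a congruence expressing the generating function \eqref{1e1} modulo $2^j$ as an eta-quotient. The key elementary input is the binomial congruence
\begin{equation*}
f_1^{2^j} \equiv f_2^{2^{j-1}} \pmod{2^j},
\end{equation*}
which follows from $(1-q^n)^2 \equiv 1-q^{2n}\pmod 2$ and a routine induction on $j$. Combining this with $\sum_n a_t(n) q^n = f_t^t/f_1$ and the invertibility of $f_2^{2^{j-1}}$ in $\mathbb{Z}[[q]]$ yields
\begin{equation*}
\sum_{n=0}^{\infty} a_t(n)\,q^n \;\equiv\; \frac{f_1^{2^j-1}\,f_t^{\,t}}{f_2^{2^{j-1}}} \pmod{2^j}.
\end{equation*}

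Next I would promote the right-hand side to a cusp form. Since $\gcd(m,6)=1$, we have $t^2\equiv 1\pmod{24}$, and multiplying by $q^{(t^2-1)/24}$ identifies the right-hand side with the $q$-expansion of the eta-quotient
\begin{equation*}
E(z) \;=\; \eta(z)^{2^j-1}\,\eta(2z)^{-2^{j-1}}\,\eta(tz)^{\,t}.
\end{equation*}
If necessary, I would multiply $E(z)$ by a suitable auxiliary factor of the form $\eta(\delta z)^{r}$ so that the resulting eta-quotient has positive integer weight, integral exponents, and is holomorphic and vanishing at every cusp of $\Gamma_0(N)$, where $N$ is an appropriate multiple of $\operatorname{lcm}(24, 2t)$. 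The verification proceeds through the Ligozat--Ono criterion: one checks (i) $\sum_\delta \delta r_\delta\equiv 0\pmod{24}$ and $\sum_\delta (N/\delta) r_\delta\equiv 0\pmod{24}$; (ii) $\prod_\delta \delta^{r_\delta}$ is a rational square (or the Nebentypus character is correctly identified); and (iii) the order at every cusp $c/d$ of $\Gamma_0(N)$, given by $\frac{N}{24}\sum_\delta \frac{\gcd(d,\delta)^2\,r_\delta}{\gcd(d,N/d)\,d\,\delta}$, is strictly positive.

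Once a cusp form $G(z)\in S_{k_0}(\Gamma_0(N),\chi)$ with integer coefficients has been produced whose $q$-expansion agrees with $q^{(t^2-1)/24}\cdot \sum_n a_t(n) q^n$ modulo $2^j$, Serre's density theorem gives that, for any $M\geq 1$, the set of indices $n$ for which the $n$-th coefficient of $G$ is divisible by $M$ has density $1$. Applied with $M=2^j$ and transported through the congruence (accounting for the harmless $q^{(t^2-1)/24}$ shift), this yields $a_{3^\alpha m}(n)\equiv 0\pmod{2^j}$ outside a density-zero set, which is the desired conclusion. The main obstacle is the cusp analysis in the second step: verifying positivity of the order at every cusp of $\Gamma_0(N)$, uniformly in $j$, $\alpha$ and the prime factorisation of $m$, requires a careful case analysis, and this is where the choice of auxiliary eta factor and of the level $N$ must be made with care.
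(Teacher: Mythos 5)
Your overall strategy is the same as the paper's (binomial congruence $f_1^{2^j}\equiv f_2^{2^{j-1}}\pmod{2^j}$, package the generating function into an eta-quotient, verify modularity via the Ligozat--Ono criteria, then invoke Serre's density theorem), but there is a genuine gap exactly in the case that makes this theorem new, namely $\alpha\geq 1$. You assert that $\gcd(m,6)=1$ forces $t^2\equiv 1\pmod{24}$ for $t=3^{\alpha}m$; this is false once $3\mid t$: one has $t^2-1\equiv 8\pmod{24}$ for $\alpha\geq 1$. Consequently $\sum_{\delta}\delta r_\delta=(2^j-1)-2\cdot 2^{j-1}+t^2=t^2-1\not\equiv 0\pmod{24}$, so your $E(z)=\eta(z)^{2^j-1}\eta(2z)^{-2^{j-1}}\eta(tz)^{t}$ does not satisfy the hypotheses of Theorem \ref{thm2.3} and is not a form on $\Gamma_0(N)$ with Nebentypus for any choice of $N$; moreover for $j=1$ the weight $\tfrac12\bigl((2^j-1)-2^{j-1}+t\bigr)$ is not even an integer. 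Your proposed repair --- multiplying by ``a suitable auxiliary factor $\eta(\delta z)^r$'' --- does not work as stated: if $G=E\cdot H$ with $H$ an arbitrary integral eta-factor, divisibility of almost all coefficients of $G$ by $2^j$ does not transfer back to the coefficients of $E$, so the congruence with $a_{3^{\alpha}m}(n)$ is lost unless the auxiliary factor is itself $\equiv 1\pmod{2^j}$, a requirement you never impose.

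The paper avoids both problems by rescaling every eta-argument by $24$: it works with $B_{\alpha,m,j}(z)=\eta^{t+2^{j+1}}(24tz)/\bigl(\eta(24z)\,\eta^{2^j}(48tz)\bigr)$, whose exponent sum is $24(t^2-1)\equiv 0\pmod{24}$ automatically (no condition on $t$ mod $24$ is needed, only the integer shift $q^{t^2-1}$), and whose correcting factor $\bigl(\eta^2(24tz)/\eta(48tz)\bigr)^{2^j}\equiv 1\pmod{2^{j+1}}$ both preserves the congruence and makes the weight $\tfrac{t-1}{2}+2^{j-1}$ a positive integer for all $j\geq 1$. One further point: you demand strictly positive order at every cusp (a cusp form), which is stronger than necessary and may not be attainable here --- in the paper's cusp analysis the order is exactly $0$ at some cusps --- whereas the version of Serre's theorem used (Theorem \ref{thm2.5}) applies to all of $M_k(\Gamma_0(N),\chi)$, so holomorphy at the cusps suffices. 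With the $24$-rescaled quotient and a unit multiplier congruent to $1$ modulo $2^j$, your outline becomes the paper's proof; as written, the modularity step fails for $\alpha\geq 1$ (and for $j=1$), and the auxiliary-factor fix is not justified.
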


	\begin{theorem}\label{mainthm2} Let $j\geq 1$, $\al\geq 0$ and $m\geq 1$ be integers with $\gcd(m,6)=1$. Then the set 
		$$ \{ n \in \mathbb{N}: a_{3^{\alpha}m}(n) \equiv 0 \pmod{3^j}\}$$ 
	has arithmetic density $1.$
	\end{theorem}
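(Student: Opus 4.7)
The plan is to apply Serre's theorem on the density of Fourier coefficients of a cusp form that are divisible by a fixed prime power, after constructing an eta-quotient whose $q$-expansion is congruent modulo $3^j$ to the series $\sum_n a_{3^{\al}m}(n)\,q^n$, up to multiplication by an explicit auxiliary power series in $q^3$. This extends the strategy used in \cite{MeherJindal2022} for the case $t=3$ (i.e.\ $\al=1$, $m=1$) to the full parameter range $\al\geq 0$, $\gcd(m,6)=1$.

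First, I would invoke the Frobenius-style congruence $(q;q)_{\infty}^{3^{j+k}} \equiv (q^3;q^3)_{\infty}^{3^{j+k-1}} \pmod{3^{j+k}}$, valid for every $k\geq 1$ by iterating $(q;q)_{\infty}^3 \equiv (q^3;q^3)_{\infty}\pmod{3}$. Combining this with the generating function \eqref{1e1} and clearing the factor of $(q;q)_{\infty}$ in the denominator yields
\[
(q;q)_{\infty}^{3^{j+k}-1}\,(q^{3^{\al}m};q^{3^{\al}m})_{\infty}^{3^{\al}m}
\equiv
(q^3;q^3)_{\infty}^{3^{j+k-1}}\sum_{n\geq 0} a_{3^{\al}m}(n)\,q^n \pmod{3^j}.
\]
The left-hand side is, up to an explicit power of $q$, the Fourier expansion of the eta-quotient $F_{j,k}(\tau):=\eta(\tau)^{3^{j+k}-1}\,\eta(3^{\al}m\tau)^{3^{\al}m}$ of half-integer weight $(3^{j+k}-1+3^{\al}m)/2$.

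Next, I would apply the standard Ligozat criteria to verify that, for $k$ chosen large enough, $F_{j,k}$ defines a holomorphic cusp form on $\Gamma_0(N)$ with a suitable nebentypus character, for some level $N=N(\al,m,j,k)$. The verification is a routine case analysis: compute the order of $F_{j,k}$ at each cusp $c/d$ of $\Gamma_0(N)$ via the classical eta-quotient formula, check positivity at every cusp, and confirm the divisibility constraints on the exponents. Once $F_{j,k}$ is recognised as a holomorphic cusp form, Serre's theorem (or its half-integer weight analogue due to Ono) guarantees that its Fourier coefficients are divisible by $3^j$ for a density-one set of indices.

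The remaining, and most delicate, step is to transfer this density statement from $F_{j,k}$ back to $a_{3^{\al}m}(n)$. The congruence above shows that density-one divisibility of the coefficients of $F_{j,k}$ by $3^j$ is equivalent to the same statement for the coefficients of the convolution $(q^3;q^3)_{\infty}^{3^{j+k-1}}\sum_n a_{3^{\al}m}(n)q^n$. Since $(q^3;q^3)_{\infty}^{3^{j+k-1}}$ has constant term $1$ it is formally invertible, and the key observation is that iterated Frobenius gives $(q^3;q^3)_{\infty}^{3^{j+k-1}}\equiv (q^{3^{j+k}};q^{3^{j+k}})_{\infty}\pmod{3}$, so both this factor and its formal inverse are extremely sparse modulo $3$. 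By choosing $k$ arbitrarily large, splitting the analysis into residue classes modulo $3^{j+k}$, and combining this sparsity with Ono's density-one theorem for the partition function modulo $3$ (which controls the coefficients of the inverse modulo $3$) together with the quantitative decay rate implicit in Serre's theorem, one arrives at the required density-one divisibility of $a_{3^{\al}m}(n)$ by $3^j$. I expect this final inversion step to be the principal technical obstacle.
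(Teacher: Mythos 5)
The decisive problem is the final ``inversion'' step, which you correctly identify as the obstacle but do not actually overcome, and which in fact cannot be overcome in the form you propose. Your congruence reads $F_{j,k}\equiv (q^3;q^3)_{\infty}^{3^{j+k-1}}\cdot\sum_{n}a_{3^{\alpha}m}(n)q^{n}\pmod{3^{j}}$, and Serre's theorem only gives you that the coefficients of the \emph{product} on the right are almost always divisible by $3^{j}$. Divisibility of the coefficients of a product says nothing about the coefficients of one factor: to get back to $a_{3^{\alpha}m}(n)$ you would have to multiply by the formal inverse of $(q^3;q^3)_{\infty}^{3^{j+k-1}}$, whose coefficients are partition-type numbers, and there is no ``Ono density-one theorem for the partition function modulo $3$'' to invoke --- the distribution of $p(n)$ modulo $3$ is a well-known open problem; the density-one results of Ono and others concern other moduli. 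Sparsity of $(q^3;q^3)_{\infty}^{3^{j+k-1}}\equiv(q^{3^{j+k}};q^{3^{j+k}})_{\infty}\pmod 3$ does not rescue the argument either: a sparse multiplier can still turn non-divisible $a_{3^{\alpha}m}(n)$ into divisible product coefficients through cancellation modulo $3^{j}$, and in any case your control of that factor is only modulo $3$, while the conclusion you need is modulo $3^{j}$. A second genuine defect is the weight: $\eta(\tau)^{3^{j+k}-1}\eta(3^{\alpha}m\tau)^{3^{\alpha}m}$ has weight $\frac{3^{j+k}-1+3^{\alpha}m}{2}$, which is half-integral because $3^{j+k}-1$ is even and $3^{\alpha}m$ is odd, so the integral-weight Serre theorem (Theorem \ref{thm2.5}) does not apply, and the half-integral analogue you gesture at is not something that can simply be cited in this form. (Also, without the substitution $q\mapsto q^{24}$ your $F_{j,k}$ does not even have an expansion in integral powers of $q$.)

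Both problems disappear if the auxiliary factor is chosen to be congruent to $1$ rather than to $(q^3;q^3)_{\infty}^{3^{j+k-1}}$, and this is exactly what the paper does. Setting $C_{\alpha,m}(z)=\eta^{3}(2^{3}3^{\alpha+1}mz)/\eta(2^{3}3^{\alpha+2}mz)$, the congruence \eqref{3e2} with $p=3$ gives $C_{\alpha,m}^{3^{j}}(z)\equiv 1\pmod{3^{j+1}}$, so the eta-quotient $D_{\alpha,m,j}(z)=\frac{\eta^{3^{\alpha}m}(2^{3}3^{\alpha+1}mz)}{\eta(24z)}\,C_{\alpha,m}^{3^{j}}(z)$ has Fourier coefficients literally congruent, modulo $3^{j+1}$, to $a_{3^{\alpha}m}(n)$ supported on the progression $24n+3^{2\alpha}m^{2}-1$; no inversion is needed. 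Moreover its weight $\frac{3^{\alpha}m-1}{2}+3^{j}$ is a positive integer, and a short case analysis at the cusps of $\Gamma_{0}(2^{3}3^{\alpha+2}m)$ (according to the exact power of $3$ in the cusp denominator) shows holomorphy everywhere, so Theorem \ref{thm2.5} --- which requires only a holomorphic modular form, not a cusp form --- applies directly and yields the density statement. If you want to repair your write-up, replace the multiplier $(q;q)_{\infty}^{3^{j+k}}$ by the $3^{j}$-th power of an eta-quotient of the shape $\eta^{3}(\delta z)/\eta(3\delta z)$, chosen so that the total weight is integral and the order at every cusp is nonnegative; as it stands, your proposal has a genuine gap at its central step.
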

	
	\begin{theorem}\label{mainthm11} For a positive integer $m,$ let $a_1, a_2, \ldots, a_m$ be non negative integers. Let $t=p_1^{a_1}p_2^{a_2}\ldots p_m^{a_m},$ where $p_i \geq 5$'s
	are prime numbers. Then for every positive integer $j,$  the set 
	$$\left\{0 < n < x: a_{t}(n)\equiv 0 \pmod{p_i^j} \right\}$$ has arithmetic density $1.$
	\end{theorem}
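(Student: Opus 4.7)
Fix a prime $p = p_i\ge 5$ (with $a_i\ge 1$, so $p\mid t$) and a positive integer $j$. The plan is to exhibit a holomorphic cusp form whose Fourier coefficients are congruent modulo $p^j$ to those recording $a_t(n)$, and then to apply Serre's theorem on the divisibility of Fourier coefficients of integer-coefficient cusp forms. The natural starting point is the eta-quotient
$$F(z) := \frac{\eta(tz)^t}{\eta(z)} = q^{(t^2-1)/24}\sum_{n\ge 0}a_t(n)\,q^n;$$
the prefactor $(t^2-1)/24$ is a positive integer because $\gcd(t,24)=1$, as every $p_i\ge 5$. A routine check of Ligozat's criterion shows that $F$ is a weight-$(t-1)/2$ modular form on $\Gamma_0(t)$, but its order at the cusp $0$ equals zero, so $F$ is not a cusp form and Serre's theorem cannot be applied to $F$ directly.

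To remedy this without altering the Fourier coefficients modulo $p^j$, I will use the identity $(1-X)^{p^j}\equiv (1-X^p)^{p^{j-1}}\pmod{p^j}$, which implies that the integer power series
$$B(z) := \frac{\eta(z)^{p^j}}{\eta(pz)^{p^{j-1}}}$$
has constant term $1$ and satisfies $B(z)\equiv 1\pmod{p^j}$. For a suitable positive integer $k$ (the choice $k=p$ works uniformly for all $j\ge 1$), I define
$$G_k(z) := F(z)\,B(z)^k = \frac{\eta(tz)^t\,\eta(z)^{p^jk-1}}{\eta(pz)^{p^{j-1}k}},$$
so that $G_k$ has integer Fourier coefficients and $G_k\equiv F\pmod{p^j}$ coefficient-wise. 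The core technical step is to verify, via Ligozat's theorem, that $G_k$ is a holomorphic cusp form on $\Gamma_0(t)$ of integer weight with some Nebentypus character. The integrality conditions $\sum_\delta \delta r_\delta\equiv 0\pmod{24}$ and $\sum_\delta (N/\delta)r_\delta\equiv 0\pmod{24}$ reduce to the known congruences $t^2\equiv p^2\equiv 1\pmod{24}$; and for cuspidality one must check, at every cusp indexed by a divisor $c'\mid t$, that the order of $G_k$ there is strictly positive. Splitting into the cases $p\mid c'$ and $p\nmid c'$, each case reduces to a short positivity inequality that holds when $k=p$.

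With $G_k$ established as an integer-coefficient cusp form on $\Gamma_0(t)$, Serre's theorem yields that the set of $N$ for which the $q^N$-coefficient of $G_k$ is divisible by $p^j$ has natural density $1$. Transferring via $G_k\equiv F\pmod{p^j}$ and comparing with the expansion $F(z)=q^{(t^2-1)/24}\sum a_t(n)q^n$ gives $a_t(n)\equiv 0\pmod{p_i^j}$ for a set of $n$ of arithmetic density $1$, which is the desired conclusion. The principal obstacle is the uniform cusp-order verification for $G_k$: when $t$ has many prime divisors, the set of cusps $c'\mid t$ grows, and one must guarantee positivity at every such $c'$ simultaneously through a careful case analysis in terms of $\gcd(c',p)$ and $\gcd(c',t/c')$.
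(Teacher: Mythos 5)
Your proposal is correct and follows essentially the same route as the paper: both multiply the generating eta-quotient $\eta(tz)^t/\eta(z)$ (the paper uses the $24$-rescaled form $\eta(24tz)^t/\eta(24z)$ at level $2^6 3^2 t$) by an auxiliary eta-quotient congruent to $1 \pmod{p^j}$ coming from $(q;q)_\infty^{p^j}\equiv (q^p;q^p)_\infty^{p^{j-1}}$, verify modularity and behaviour at the cusps via the Ligozat/Ono criteria, and conclude with Serre's density theorem; your variant works at the smaller level $t$ and arranges strictly positive cusp orders (a genuine cusp form, and your positivity check with $k=p$ does go through in both cases $p\mid c'$ and $p\nmid c'$), whereas the paper only needs nonnegative orders because its Theorem \ref{thm2.5} is stated for all of $M_k(\Gamma_0(N),\chi)$. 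The one caveat is your standing assumption $a_i\ge 1$: the statement permits $a_i=0$, i.e.\ $p=p_i\nmid t$, in which case your $G_k$ lives on $\Gamma_0(pt)$ and has order exactly zero at the cusp $1/p$, so it is not cuspidal; there you should either invoke the modular-form version of Serre's theorem (applied to $F$ itself, which is in effect what the paper's proof does when $a_i=0$) or note that this degenerate case adds nothing and is handled separately.
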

As a consequence of the above theorem, we obtain the following result.
\begin{coro}\label{coro11}
	Let $j$ be a positive integer and $ p \geq 5$ be a prime number. Then $a_p(n)$ is almost always is divisible by $p^j,$ namely
	$$ \lim\limits_{X \to \infty} \frac{\# \left\{0 < n \leq X: a_p(n) \equiv 0 \pmod{p^j}\right\}}{X} =1.$$ 	
\end{coro}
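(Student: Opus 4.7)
The plan is to deduce Corollary~\ref{coro11} as a direct specialization of Theorem~\ref{mainthm11}. Setting $m=1$, $p_1=p$ and $a_1=1$ in the hypothesis of Theorem~\ref{mainthm11} gives $t = p_1^{a_1} = p$, and the assumption $p \geq 5$ of the corollary matches the required condition $p_i \geq 5$ on the primes. Feeding these choices and the given positive integer $j$ into the theorem yields that the set $\{\,0 < n < X : a_p(n) \equiv 0 \pmod{p^j}\,\}$ has arithmetic density $1$, which is precisely the displayed limit in the statement of the corollary.

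Since the corollary is a one-line specialization, no substantive new argument is required beyond checking that $t=p$ satisfies the hypothesis of Theorem~\ref{mainthm11}. For context, the strategy I would follow to prove Theorem~\ref{mainthm11} itself is the classical eta-quotient/modular-forms route: starting from
\[
\sum_{n=0}^{\infty} a_t(n)\, q^n \;=\; \frac{(q^t;q^t)_{\infty}^{t}}{(q;q)_{\infty}},
\]
one multiplies by a suitable power of $(q;q)_{\infty}$ and exploits the Frobenius congruence $(q;q)_{\infty}^{p_i^j} \equiv (q^{p_i};q^{p_i})_{\infty}^{p_i^{j-1}} \pmod{p_i^j}$ to re-express the generating function, modulo $p_i^j$, as the Fourier expansion of a holomorphic modular form of integer weight on some $\Gamma_0(N)$ with character and integer Fourier coefficients. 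A classical theorem of Serre on the density of Fourier coefficients divisible by $M$ then delivers the density-one conclusion.

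The main obstacle therefore lies entirely upstream, in the proof of Theorem~\ref{mainthm11}; for the corollary itself there is no obstacle. In short, the proposal is to invoke Theorem~\ref{mainthm11} with $m=1$, $p_1=p$, $a_1=1$ and simply rewrite the resulting arithmetic-density-one statement in the limit notation that appears in the corollary.
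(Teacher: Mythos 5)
Your proposal is correct and matches the paper exactly: the paper states Corollary \ref{coro11} as an immediate consequence of Theorem \ref{mainthm11}, obtained precisely by taking $t=p$ (i.e.\ $m=1$, $p_1=p$, $a_1=1$) and restating the density-one conclusion in limit form. No further argument is needed, and your sketch of the upstream proof of Theorem \ref{mainthm11} also agrees with the eta-quotient/Serre strategy used in the paper.
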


	In the remaining part of this section, we give an algorithm for $p$-core partitions for primes $p\geq 3$ and we obtain several congruences for $p$-core partitions modulo 3 and modulo 5 for $5\leq p \leq 23$ using this algorithm. We use the technique given by Radu and Sellers \cite{Radu2011}. Before proceeding further, we define some notation. For an integer $m$, a prime $p\geqslant 3$ and $t\in \{0,1,\dots,m-1\}$, we set
	\begin{align*}
		\kappa:=&\kappa(m)=\gcd(m^2-1,24),\\
		\hat {p}:=&\frac{p^2-1}{24},\\
		A_t:=&A_t(m,p)=\frac{24m}{\gcd(-\kappa(24 t+p^2-1),24m)} =\begin{cases}
			\frac{m}{\gcd(-\kappa (t+\hat{p}),m)} & \textrm{ if } p>3,\\
			\frac{3m}{\gcd(-\kappa (3t+1),3m)}& \textrm{ if } p=3,
		\end{cases}\\
		\epsilon_2:=&\epsilon_2(m,p)=\begin{cases}
			\frac{1-(-1)^{\frac{p-1}{2}}}{2} & \textrm{ if } 2| m,\\
			0 & \textrm{ if } 2\nmid m,
		\end{cases}\\
		\epsilon_p:=&\epsilon_p(m)=\begin{cases}
			1 & \textrm{ if } p\nmid m,\\
			0 & \textrm{ if } p|m.
		\end{cases}
	\end{align*}
	We note that $\hat {p}\in \Z$ for $p\geq 5$. Also, it is immediate that $A_t \in \Z$ for each $t\in \{0,1,\dots,m-1\}$.
	
	\begin{theorem}\label{mainthmRaduSellers}
		Let $p\geq 3$ be a prime and let $u$ be an integer. For a positive integer $g$, let $e_1, e_2, \cdots, e_g$ be non-negative integers. Let $m=p_1^{e_1} p_2^{e_2} \cdots p_g^{e_g}$ where $p_i$'s are prime numbers. Let $t \in \{0,1,\ldots, m-1\}$ such that $A_t$ divides $2^{\epsilon_2} p^{\epsilon_p}p_1p_2\cdots p_{g}$. Define
		\begin{align*}
			P(t):=\left\{t^{'}: \exists [s]_{24m} \textrm{ such that } t^{'}\equiv ts+\frac{(s-1)(p^2-1)}{24}  \pmod {m}\right\}
		\end{align*} 
		where $[s]_{24m}$ is the residue class of $s$ in $\mathbb{Z}_{24m}$. If the congruence $a_p(mn+t^{'}) \equiv 0 \pmod u$ holds for all $t^{'} \in P(t)$ and $ 0\leq n \leq \left\lfloor \frac{2^{\epsilon_2}(p+1)^{\epsilon_p}(p-1)(p_1+1)(p_2+1)\cdots (p_{g}+1)}{24}-\frac{(p^2-1)}{24m} \right\rfloor$, then $a_p(mn+t^{'}) \equiv 0 \pmod u$ holds for all $t^{'} \in P(t)$ and $n \geq 0$.
	\end{theorem}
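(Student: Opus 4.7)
The strategy is to recast the claim as a vanishing problem for a holomorphic modular form on a suitable congruence subgroup $\Gamma_0(N)$ and invoke Sturm's bound, following the framework of Radu and Sellers \cite{Radu2011}. The starting point is the identity
\begin{equation*}
\sum_{n=0}^{\infty} a_p(n) q^n = \frac{(q^p;q^p)_\infty^p}{(q;q)_\infty},
\end{equation*}
which displays the generating function of $a_p(n)$ as (a rational power of $q$ times) the eta-quotient $\eta(p\tau)^p/\eta(\tau)$. After the shift by $q^{(p^2-1)/24}$ this is a weakly holomorphic modular function of weight $(p-1)/2$ on $\Gamma_0(p)$ (with the obvious modification when $p=3$, encoded by the factor $\epsilon_p$).

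First I would isolate the desired arithmetic progressions by applying the $U_m$-operator. The orbit description of $P(t)$ arises naturally at this step: under the action of $(\Z/24m\Z)^*$ through the cusp-permuting elements of $\mathrm{SL}_2(\Z)$, the residue class $t$ modulo $m$ is transported to $ts+(s-1)(p^2-1)/24 \pmod{m}$, and all these classes must be handled simultaneously because the modular transformations mix them. After multiplying by an appropriate auxiliary eta-quotient, the combined generating series $\sum_{t'\in P(t)}\sum_n a_p(mn+t')\,q^{mn+t'+\hat p}$ is shown to represent a holomorphic modular form of an explicit weight on $\Gamma_0(N)$ with $N$ a divisor of $24mp$.

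Next I would check holomorphicity at every cusp of $\Gamma_0(N)$ by means of Ligozat's order-of-vanishing formula for eta-quotients. This is the step where the hypothesis $A_t \mid 2^{\epsilon_2}p^{\epsilon_p}p_1p_2\cdots p_g$ is decisive: $A_t$ is exactly the denominator of the cusp-order expression at the cusp attached to the class $t$, so the stated divisibility is precisely what forces non-negativity of all cusp orders. The auxiliary $\epsilon_2$ and $\epsilon_p$ encode whether $2$ or $p$ divides $m$ and thus whether these primes contribute additional cusps.

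Finally, Sturm's theorem guarantees that a holomorphic modular form of weight $k$ on $\Gamma_0(N)$ whose $q$-expansion vanishes modulo $u$ up to the bound $\frac{k}{12}\,[\mathrm{SL}_2(\Z):\Gamma_0(N)]$ vanishes identically modulo $u$. Inserting the explicit weight together with the index formula
\begin{equation*}
[\mathrm{SL}_2(\Z):\Gamma_0(N)] = N\prod_{q\mid N}\!\Bigl(1+\tfrac{1}{q}\Bigr)
\end{equation*}
and subtracting the shift $\hat p=(p^2-1)/24$ built into the eta-quotient yields exactly the bound $\bigl\lfloor \tfrac{2^{\epsilon_2}(p+1)^{\epsilon_p}(p-1)(p_1+1)\cdots(p_g+1)}{24} - \tfrac{p^2-1}{24m}\bigr\rfloor$ quoted in the theorem. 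I expect the main obstacle to be the careful bookkeeping of cusp widths and Ligozat orders at the possibly many cusps of $\Gamma_0(N)$, and the translation of the single divisibility condition $A_t\mid \cdots$ into the full family of cusp-wise inequalities; once this is done, the rest is a direct application of a well-developed machinery.
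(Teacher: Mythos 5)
Your overall strategy --- extract the arithmetic progressions, attach to them a modular object on $\Gamma_0(N)$ with $N=2^{\epsilon_2}p^{\epsilon_p}p_1\cdots p_g$, check holomorphy at the cusps, and finish with a finite Sturm-type check --- is indeed the framework of the paper, which feeds the tuple $(m,M,N,r,t)=(p_1^{e_1}\cdots p_g^{e_g},\,p,\,2^{\epsilon_2}p^{\epsilon_p}p_1\cdots p_g,\,(r_1,r_p)=(-1,p),\,t)$ into Radu's criterion (Lemma \ref{finite check}). But your two key attributions are wrong, and they conceal the actual technical content. The hypothesis $A_t\mid 2^{\epsilon_2}p^{\epsilon_p}p_1\cdots p_g$ is \emph{not} what forces non-negativity of cusp orders: it is exactly condition (e) in the definition of Radu's set $\Delta^{*}$ (with $\sum_{\delta\mid M}\delta r_\delta=p^2-1$), i.e.\ a level/integrality condition needed so that the function attached to the class $t$ can be placed on $\Gamma_0(N)$ at all (Lemma \ref{lemdelta}). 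Non-negativity at the cusps is a separate matter: one must show $p_{m,r}(\gamma)=\min_{\lambda}\frac{1}{24}\sum_{\delta\mid M}r_\delta\,\gcd^2(\delta a+\delta\kappa\lambda c,\,mc)/(\delta m)\geq 0$, which the paper proves \emph{unconditionally} for $r=(-1,p)$ by an elementary gcd case analysis ($p\mid c$ versus $p\nmid c$, then $p\mid m$ versus $p\nmid m$) in Lemma \ref{pmrpositive}. Your plan to verify holomorphy via Ligozat's formula only covers the auxiliary eta-quotient; the $U_m$-extracted series is not an eta-quotient, and this min-over-$\lambda$ estimate (together with a complete set of double-coset representatives of $\Gamma_0(N)\backslash\Gamma/\Gamma_\infty$, supplied by Lemma \ref{square-free} because $N$ or $N/2$ is squarefree) is precisely the step your sketch leaves out.

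Relatedly, the roles of $\epsilon_2$, $\epsilon_p$, $\kappa$ are subtler than ``whether $2$ or $p$ divides $m$'': $\epsilon_p=1$ exactly when $p\nmid m$, and $\epsilon_2$ depends on $p\bmod 4$ as well as on $2\mid m$, because it is engineered to satisfy the $2$-adic conditions (d) and (f) of $\Delta^{*}$; your outline never addresses conditions (c), (d), (f), which is where these quantities actually do work. Finally, the stated bound is not the classical Sturm bound $\frac{k}{12}[\Gamma:\Gamma_0(N)]$ for a form of weight $\frac{p-1}{2}$; it is Radu's quantity $\nu=\frac{1}{24}\bigl(\sum_{\delta\mid M}r_\delta\bigr)[\Gamma:\Gamma_0(N)]-\frac{1}{24m}\sum_{\delta\mid M}\delta r_\delta-\frac{t_{\min}}{m}$ with auxiliary tuple $a=0$, after dropping $t_{\min}/m\geq 0$ and using $[\Gamma:\Gamma_0(N)]=2^{\epsilon_2}(p+1)^{\epsilon_p}(p_1+1)\cdots(p_g+1)$; note the subtracted term is $\frac{p^2-1}{24m}$, not $\hat p$. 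Your numerology happens to match because $\sum_{\delta\mid M}r_\delta=p-1$, but as written the derivation would have to be replaced by (or re-derive) this valence-type count, so the proposal as it stands has genuine gaps.
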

	
	Let $m=p p_1p_2 \cdots p_g$ be a square-free integer  where $p$ and $p_i$'s are prime numbers. Since $p|m$, we have $\epsilon_p=0$. If $p=3$, then $3\nmid\kappa$ gives $9|A_t$ which implies that $A_t\nmid  2^{\epsilon_2} pp_1p_2\cdots p_{g}$ for each $t \in \{0,1,\ldots, m-1\}$. If $p>3$, then $A_t|  2^{\epsilon_2} pp_1p_2\cdots p_{g}$ for each $t \in \{0,1,\ldots, m-1\}$. Thus we have the following corollary to Theorem \ref{mainthmRaduSellers}.
	
	\begin{coro}\label{squarefreem}
		Let $p\geq 5$ be a prime and let $u$ be an integer. For a positive integer $g$, let $m=p p_1p_2 \cdots p_g$ where $p$ and $p_i$'s are distinct prime numbers. Let $t \in \{0,1,\ldots, m-1\}$. Define
		\begin{align*}
			P(t):=\left\{t^{'}: \exists [s]_{24m} \textrm{ such that } t^{'}\equiv ts+(s-1)\hat{p}  \pmod {m}\right\}
		\end{align*}
		where $[s]_{24m}$ is the residue class of $s$ in $\mathbb{Z}_{24m}$. If the congruence $a_p(mn+t^{'}) \equiv 0 \pmod u$ holds for all $t^{'} \in P(t)$ and $ 0\leq n \leq \left\lfloor \frac{2^{\epsilon_2}(p^2-1)(p_1+1)(p_2+1)\cdots (p_{g}+1)}{24}-\frac{(p^2-1)}{24m} \right\rfloor\leq 2^{\epsilon_2}\hat{p}(p_1+1)(p_2+1)\cdots (p_{g}+1)-1$, then $a_p(mn+t^{'}) \equiv 0 \pmod u$ holds for all $t^{'} \in P(t)$ and $n \geq 0$. 
	\end{coro}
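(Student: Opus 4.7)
The plan is to obtain this corollary as a direct specialization of Theorem \ref{mainthmRaduSellers}: once $m=pp_1\cdots p_g$ is square-free and $p$ is one of its prime factors, only two things must be verified, namely that the divisibility condition $A_t\mid 2^{\epsilon_2}pp_1\cdots p_g$ holds for \emph{every} $t\in\{0,1,\dots,m-1\}$ (which is why the corollary imposes no restriction on $t$), and that the bound on $n$ takes the stated form.

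For the divisibility hypothesis I would use the compact formula for $A_t$ available when $p>3$. Because $p\geq 5$, $\hat p=(p^2-1)/24$ is an integer, hence
\begin{equation*}
A_t=\frac{m}{\gcd\bigl(\kappa(t+\hat p),\,m\bigr)}
\end{equation*}
is a divisor of $m=pp_1\cdots p_g$ for every $t$, and in particular of $2^{\epsilon_2}pp_1\cdots p_g$. The restriction $p\geq 5$ (as opposed to $p=3$) enters precisely here: the parenthetical paragraph just above the corollary records that for $p=3$ one instead has $9\mid A_t$, which obstructs this divisibility.

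It remains to translate the bound on $n$. Since $p\mid m$ we have $\epsilon_p=0$, so $(p+1)^{\epsilon_p}=1$, while in Theorem \ref{mainthmRaduSellers} the primes dividing $m$ are $p,p_1,\dots,p_g$, so the product $\prod(p_i+1)$ appearing there equals $(p+1)(p_1+1)\cdots(p_g+1)$. Substituting and using $(p-1)(p+1)=p^2-1$, the theorem's bound becomes
\begin{equation*}
\left\lfloor\frac{2^{\epsilon_2}(p^2-1)(p_1+1)\cdots(p_g+1)}{24}-\frac{p^2-1}{24m}\right\rfloor,
\end{equation*}
which is the first bound stated in the corollary. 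The supplementary inequality follows by noting that $X:=2^{\epsilon_2}\hat p(p_1+1)\cdots(p_g+1)$ is an integer (since $\hat p\in\mathbb Z$) equal to $2^{\epsilon_2}(p^2-1)(p_1+1)\cdots(p_g+1)/24$, so the bound above equals $\lfloor X-\hat p/m\rfloor$, and subtracting the strictly positive quantity $\hat p/m$ from an integer yields something strictly less than $X$, hence at most $X-1$. The set $P(t)$ is identical to that of the theorem after rewriting $(s-1)(p^2-1)/24$ as $(s-1)\hat p$, so Theorem \ref{mainthmRaduSellers} applies verbatim and delivers the conclusion.

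I do not expect any genuine obstacle; the whole derivation is essentially bookkeeping, with the single substantive ingredient being the fact $A_t\mid m$, which depends on $\hat p\in\mathbb Z$ and is therefore exactly the reason the hypothesis $p\geq 5$ is required.
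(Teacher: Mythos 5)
Your proposal is correct and follows the same route as the paper: the authors derive the corollary in the paragraph preceding it by specializing Theorem \ref{mainthmRaduSellers}, noting that $p\mid m$ forces $\epsilon_p=0$ and that for $p>3$ the quantity $A_t=\frac{m}{\gcd(-\kappa(t+\hat p),m)}$ divides $m$, hence divides $2^{\epsilon_2}pp_1\cdots p_g$ for every $t$. Your additional bookkeeping (translating the product over the prime factors of $m$, using $(p-1)(p+1)=p^2-1$, and deducing the final inequality from $\hat p\in\mathbb Z$ and $\hat p/m>0$) is accurate and simply makes explicit what the paper leaves implicit.
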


	In order to further simplify, we next consider the particular case of a square-free integer $m$ of the type $m=pq$  in Corollary \ref{squarefreem}. When we consider $q=2$, we obtain Corollary \ref{coro2p} and when we consider $q\geq 3$, we deduce Corollary \ref{coropq}. 
	
	\begin{coro}\label{coro2p}
		Let $p\geq 5$ be a prime and let $u$ be an integer. Let $t \in \{0,1,\ldots, 2p-1\}$. Define
		\begin{align*}
			P(t):=\left\{t^{'}: \exists [s]_{48p} \textrm{ such that } t^{'}\equiv ts+(s-1) \hat{p}  \pmod {48p}\right\}
		\end{align*} 
		where $[s]_{48p}$ is the residue class of $s$ in $\mathbb{Z}_{48p}$. If the congruence $a_p(2pn+t^{'}) \equiv 0 \pmod u$ holds for all $t^{'} \in P(t)$ and $ 0\leq n \leq 2^{\frac{1-(-1)^{\frac{p-1}{2}}}{2}} 3 \hat{p}-1$, then $a_p(2pn+t^{'}) \equiv 0 \pmod u$ holds for all $t^{'} \in P(t)$ and $n \geq 0$.
	\end{coro}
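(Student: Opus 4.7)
The plan is to deduce Corollary~\ref{coro2p} as a direct specialization of Corollary~\ref{squarefreem} to the case $g=1$ and $p_1=2$. With this choice, $m := p\cdot p_1 = 2p$ is squarefree (since $p\geq 5$ is an odd prime distinct from $2$), so Corollary~\ref{squarefreem} is applicable.

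First I would check that the auxiliary constants coincide. Because $2\mid m$, the definition gives $\epsilon_2 = \tfrac{1-(-1)^{(p-1)/2}}{2}$, which is precisely the exponent of $2$ appearing in Corollary~\ref{coro2p}. (Since $p\mid m$, one also has $\epsilon_p=0$, but this value does not enter the final bound.)

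Next I would simplify the upper bound on $n$. Substituting $p_1+1=3$ and $m=2p$ into the cruder upper bound already recorded in Corollary~\ref{squarefreem} produces
\begin{equation*}
2^{\epsilon_2}\hat{p}(p_1+1)-1 \;=\; 2^{\epsilon_2}\cdot 3\hat{p}-1,
\end{equation*}
which matches the range for $n$ stated in Corollary~\ref{coro2p}. Moreover, since $2^{\epsilon_2}\cdot 3\hat{p}$ is a positive integer and $\hat{p}/(2p)=(p^2-1)/(48p)>0$, we have
\begin{equation*}
\left\lfloor 2^{\epsilon_2}\cdot 3\hat{p}-\frac{\hat{p}}{2p}\right\rfloor \;\leq\; 2^{\epsilon_2}\cdot 3\hat{p}-1,
\end{equation*}
so verifying the congruence hypothesis over the (possibly wider) range $0\leq n\leq 2^{\epsilon_2}\cdot 3\hat{p}-1$ demanded by Corollary~\ref{coro2p} implies the corresponding hypothesis of Corollary~\ref{squarefreem}.

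Finally, I would match the set $P(t)$: the residue class $[s]_{24m}=[s]_{48p}$ parametrizing solutions in Corollary~\ref{squarefreem} is exactly the one appearing in Corollary~\ref{coro2p}, so (after reducing $t'$ modulo $m=2p$ if necessary and absorbing the shift into the variable $n$) the two descriptions of $P(t)$ yield equivalent congruence statements for $a_p(2pn+t')$. Invoking the conclusion of Corollary~\ref{squarefreem} then delivers Corollary~\ref{coro2p}. I do not anticipate any genuine obstacle: the argument is essentially a substitution together with the elementary floor inequality displayed above.
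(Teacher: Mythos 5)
Your proposal matches the paper's own derivation: Corollary \ref{coro2p} is obtained there precisely by specializing Corollary \ref{squarefreem} to $m=2p$ (i.e.\ $g=1$, $p_1=2$), with $\epsilon_2=\tfrac{1-(-1)^{(p-1)/2}}{2}$ because $2\mid m$, $\epsilon_p=0$ because $p\mid m$, and the bound $2^{\epsilon_2}\cdot 3\hat{p}-1$ arising from $2^{\epsilon_2}\hat{p}(p_1+1)-1$ exactly as you compute. The only wrinkle, the modulus $48p$ rather than $2p$ in the paper's definition of $P(t)$ here, is an inconsistency of the statement itself (Theorem \ref{mainthmRaduSellers} and Corollary \ref{coropq} use modulus $m$), and your remark about reducing $t'$ modulo $2p$ and shifting $n$ addresses it in the intended way.
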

	
	In \cite[Theorem 1.4]{radu2011a}, Radu and Sellers obatined several congruences of the form $a_p(2pn+t)\equiv 0 \pmod 3$ for $5\leq p \leq 23$ which can also be obtained from the above corollary by performing a finite check for certain initial values of $n$. They used a similar approach as in this paper to obtain their results. In fact, they simplified the set $P(t)$ in case of $m=2p$ as
	\begin{align*}
		P(t)= \left \{t^{'}: \left(\frac{24t-1}{24}\right)=\left(\frac{24t^{'}-1}{24}\right), t \equiv t^{'}\pmod 2, 0 \leq t^{'} \leq 2p-1 \right\}.
	\end{align*}
	Also, they connected there results with broken $k$-diamond partitions. Our result Theorem \ref{mainthmRaduSellers} is generalizes their work. 
	
	\begin{coro}\label{coropq}
		Let $p,q$ be two distinct primes with $p\geq5$ and $q\geq 3$ and let $u$ be an integer. Let $t \in \{0,1,\ldots, pq-1\}$. Define
		\begin{align*}
			P(t):=\left\{t^{'}: \exists [s]_{24pq} \textrm{ such that } t^{'}\equiv ts+(s-1) \hat{p}  \pmod {pq}\right\}
		\end{align*} 
		where $[s]_{24pq}$ is the residue class of $s$ in $\mathbb{Z}_{24pq}$. If the congruence $a_p(pqn+t^{'}) \equiv 0 \pmod u$ holds for all $t^{'} \in P(t)$ and $ 0\leq n \leq \frac{(p^2-1)(q+1)}{24} =\hat{p}(q+1)$, then $a_p(pqn+t^{'}) \equiv 0 \pmod u$ holds for all $t^{'} \in P(t)$ and $n \geq 0$.
	\end{coro}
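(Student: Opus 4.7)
The plan is to deduce Corollary \ref{coropq} as the special case $g=1$, $p_1=q$ of Corollary \ref{squarefreem}. First I would verify that the hypotheses translate correctly: $p \geq 5$ is prime and, setting $g=1$ with $p_1=q$, the integer $m = p p_1 = pq$ is the product of two distinct primes. The set $P(t)$ defined in Corollary \ref{coropq} is literally identical to the set $P(t)$ in Corollary \ref{squarefreem} for this choice of $m$, so no translation of the congruence hypothesis is required.

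Next, I would verify that the range $0 \leq n \leq \hat p(q+1)$ appearing in Corollary \ref{coropq} is at least as large as the range forced by Corollary \ref{squarefreem}. Since $p \geq 5$ and $q \geq 3$ are both odd primes, $m = pq$ is odd, hence $\epsilon_2(m,p) = 0$ and $2^{\epsilon_2} = 1$. The bound in Corollary \ref{squarefreem} therefore reduces to
\[
\left\lfloor \frac{(p^2-1)(q+1)}{24} - \frac{p^2-1}{24pq} \right\rfloor \;=\; \left\lfloor \hat p(q+1) - \frac{\hat p}{pq} \right\rfloor .
\]
Because $\hat p \geq 1$ and $pq \geq 15$, the subtracted term $\hat p/(pq)$ is strictly positive, so this floor is at most $\hat p(q+1) - 1 < \hat p(q+1)$.

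Consequently, if the hypothesis of Corollary \ref{coropq} holds, namely $a_p(pqn + t') \equiv 0 \pmod u$ for every $t' \in P(t)$ and every $n$ with $0 \leq n \leq \hat p(q+1)$, then in particular the hypothesis of Corollary \ref{squarefreem} is satisfied (one is checking at least as many residues). Applying Corollary \ref{squarefreem} then yields $a_p(pqn + t') \equiv 0 \pmod u$ for all $n \geq 0$ and all $t' \in P(t)$, which is the desired conclusion. There is no substantive obstacle here; the argument is a direct specialization of the previous corollary, and the only point deserving attention is the confirmation that the cleaner upper bound $\hat p(q+1)$ in place of the floor expression from Corollary \ref{squarefreem} still yields a sufficient (indeed slightly stronger) hypothesis.
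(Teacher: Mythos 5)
Your proposal is correct and matches the paper's own (implicit) argument: the paper likewise deduces Corollary \ref{coropq} by specializing Corollary \ref{squarefreem} to $m=pq$ with $q\geq 3$, noting $\epsilon_2=0$ since $m$ is odd, and observing that the stated bound $\hat p(q+1)$ dominates the floor expression. No issues.
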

	
	We obtain Theorems \ref{congruences} and \ref{congruences5} as applications of Corollary \ref{coropq}.
	
	\begin{theorem}\label{congruences}
		For all $n\geq 0$, we have
		\begin{align*}
			&a_{5}(15n+6,10,12,13) \equiv 0 \pmod 3,\\
			&a_{7}(21n+3, 8, 11, 15, 17, 18) \equiv 0 \pmod 3,\\
			&a_{11}(33n+ 3,11,12, 20, 24, 26, 27, 29 ,30,32) \equiv 0 \pmod 3,\\
			&a_{13}(39n+3,7,9,10,15,16,18,22,28,31,33,36) \equiv 0 \pmod 3,\\
			&a_{17}(51n+10,14,16,19,20,23,25,26,28,34,35, 38,41,46,47,49) \equiv 0 \pmod 3,\\
			&a_{19}(57n+7,14,16,17,19,22,25,26,31,35,37,38,41,44,50,52,55,56) \equiv 0 \pmod 3,\\
			&a_{23}(69n+3,9,16,22,27,30,31,33,34,36,42,43,46,48,51,52,58,60,61,63,64,64,67) \\
			&\hspace{
				13cm}\equiv 0 \pmod 3.
		\end{align*}
	\end{theorem}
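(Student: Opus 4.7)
The plan is to apply Corollary \ref{coropq} with $q=3$ (so $m=3p$) and $u=3$ to each prime $p\in\{5,7,11,13,17,19,23\}$ appearing in the statement, reducing the theorem to a finite verification. For each such $p$ set $\hat{p}=(p^2-1)/24$, so $\hat{p}\in\{1,2,5,7,12,15,22\}$ respectively.

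For each residue $t$ listed in the statement I would first determine the orbit
\[
P(t)=\left\{t'\pmod{3p}:\exists\,[s]_{72p}\text{ with }t'\equiv ts+(s-1)\hat{p}\pmod{3p}\right\}
\]
by iterating $s$ over residue classes in $\mathbb{Z}_{72p}$. A direct check shows that the residues grouped together in each line of the statement (for instance $\{6,10,12,13\}$ for $p=5$) are exactly a union of such orbits, so it is enough to verify the target congruence for one representative per orbit.

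The second step is the base-case computation. Using the generating function
\[
\sum_{n=0}^{\infty}a_p(n)q^n=\frac{(q^p;q^p)_{\infty}^{p}}{(q;q)_{\infty}},
\]
I would compute the coefficients $a_p(3pn+t')\bmod 3$ for $0\le n\le \hat{p}(q+1)=4\hat{p}$ and for each representative $t'$, confirming that every such coefficient vanishes modulo $3$. This is a finite power-series truncation at degree at most $3p(4\hat{p}+1)-1$, which is about $6200$ in the worst case $p=23$ and much smaller for the other primes. With these finite blocks verified, Corollary \ref{coropq} immediately promotes each congruence to hold for all $n\ge 0$, producing exactly the divisibilities claimed.

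The main obstacle is the sheer bookkeeping for the larger primes. For $p=23$ one has $4\hat{p}+1=89$ coefficients to inspect for each of the $23$ residue classes appearing in the statement, and one must also check that the listed residues really do form a union of orbits $P(t)$. Both steps are routine but mechanical, so in practice they are delegated to a short computer-algebra script that (i) enumerates the orbits $P(t)$ from the formula above, (ii) expands the eta-quotient modulo $3$ to the required degree, and (iii) reads off the relevant coefficients; the verification then runs uniformly across all seven primes.
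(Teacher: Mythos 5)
Your proposal matches the paper's proof: the authors also apply Corollary \ref{coropq} with $m=3p$ and $u=3$, compute the orbits $P(t)$ (via Sage) so that each line of the theorem is a union of such orbits, and then verify the base cases $0\le n< 4\hat{p}$ by machine computation (Mathematica) of the coefficients of $(q^p;q^p)_{\infty}^{p}/(q;q)_{\infty}$ modulo $3$. The only cosmetic difference is your slightly larger (harmless) cutoff $n\le 4\hat{p}$ versus the paper's $n<4\hat{p}$; otherwise the argument is the same.
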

	
	\begin{theorem}\label{congruences5}
		For all $n\geq 0$, we have
		\begin{align*}
			&a_{7}(35n+4, 17, 22, 24, 29, 32) \equiv 0 \pmod 5,\\
			&a_{11}(55n+4,7, 9, 11, 18, 21, 22, 26, 29, 31, 32, 33, 37, 42, 43, 44, 48, 51, 53, 54) \\
			&\hspace{
				11cm}\equiv 0 \pmod 5,\\
			&a_{17}(85n+4, 7, 9, 14, 24, 37, 47, 52, 54, 57, 64, 69, 72, 74, 77, 82) \equiv 0 \pmod 5.
		\end{align*}
	\end{theorem}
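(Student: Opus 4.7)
The plan is to apply Corollary \ref{coropq} with $q=5$ for each of the three primes $p\in\{7,11,17\}$ in turn, reducing each claimed family of congruences modulo $5$ to a finite verification of initial coefficients of $\sum_{n\geq 0} a_p(n)q^n = (q^p;q^p)^p_\infty/(q;q)_\infty$.

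First I would fix $p$ and set $m=5p$, so that $\hat p=(p^2-1)/24$ is an integer and $q=5$. For each residue $t\in\{0,1,\dots,m-1\}$ listed in the statement, I would compute the set
\[
P(t)=\bigl\{t'\pmod{m}: t'\equiv ts+(s-1)\hat p\pmod{m},\ s\in(\Z/24m\Z)^*\bigr\}
\]
by letting $s$ run over the units mod $24m$ and reducing the output mod $m$. The key bookkeeping step is to check that the residues listed in the theorem are closed under this action, i.e., that they partition into a union of such orbits $P(t)$; equivalently, one can verify the Radu--Sellers criterion $\left(\tfrac{24t-1}{24}\right)=\left(\tfrac{24t'-1}{24}\right)$ together with $t\equiv t'\pmod{\gcd(24,m)}$ for all residues in each orbit.

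Once the orbit structure is in hand, Corollary \ref{coropq} reduces the proof to verifying $a_p(mn+t')\equiv 0\pmod 5$ for a single representative $t'$ of each orbit and for $0\le n\le \hat p(q+1)=6\hat p$. Concretely this is $n\le 12$ for $p=7$, $n\le 30$ for $p=11$, and $n\le 72$ for $p=17$. The required coefficients of $\prod_{n\geq 1}(1-q^{pn})^p/(1-q^n)$ can be obtained either by truncating the eta-quotient expansion modulo $5$ or, more efficiently, by using that $(1-q^{pn})^p\equiv (1-q^{p^2 n})\pmod 5$ for $p\neq 5$ (Frobenius on $\Z/5\Z[[q]]$), which gives $\sum a_p(n)q^n \equiv (q^{p^2};q^{p^2})_\infty/(q;q)_\infty \pmod 5$ and may shorten some individual checks.

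The finite verification itself is the main obstacle only in the sense of volume: for $p=17$ one must compute coefficients of $\sum a_{17}(n)q^n$ modulo $5$ up to index $85\cdot 72+84=6204$. This is a straightforward (but lengthy) power series computation that I would carry out in a computer algebra system; once every listed residue $t'$ passes the check at each $n$ in its range, Corollary \ref{coropq} immediately yields $a_p(mn+t')\equiv 0\pmod 5$ for all $n\ge 0$, completing the proof of Theorem \ref{congruences5}.
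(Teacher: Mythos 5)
Your proposal is correct and follows essentially the same route as the paper: for each $p\in\{7,11,17\}$ take $m=5p$, compute the orbits $P(t)$ (the paper uses $P(4)$ for $p=7,17$ and $P(4)\cup P(7)$ for $p=11$), verify the congruences modulo $5$ by computer for $n$ up to the bound $6\hat p$ of Corollary \ref{coropq} (i.e.\ $12$, $30$, $72$), and conclude for all $n\ge 0$. One small caution: the hypothesis of Corollary \ref{coropq} requires the initial check for \emph{every} $t'\in P(t)$, not just one representative per orbit, but the verification you actually describe (all listed residues, all $n$ in range, e.g.\ coefficients up to index $6204$ for $p=17$) does exactly that, so the argument goes through.
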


	We also obtain a number of congruences for $a_p(n)$ modulo $3$ for $p\in\{5,7,11,13,19\}$ in the following result using Theorem \ref{mainthmRaduSellers}.
	\begin{theorem}\label{congruencesrandom} 
		For all $n\geq 0$, we have
		\begin{align}
			&a_{5}(8n+3) \equiv 0 \pmod 3, \label{18e1}\\
			&a_{5}(49n+6, 13, 20,27,34,41) \equiv 0 \pmod 3,\label{18e2}\\
			&a_{7}(25n+3, 8, 13, 18) \equiv 0 \pmod 3,\label{18e3}\\		
			&a_{11}(44n+ 9,21,29,33,37) \equiv 0 \pmod 3,\label{18e4}\\
			&a_{13}(12n+3) \equiv 0 \pmod 3,\label{18e5}\\
			&a_{13}(64n+25) \equiv 0 \pmod 3,\label{18e6}\\
			&a_{19}(76n+3, 7, 19, 31, 35, 55, 63, 71, 75) \equiv 0 \pmod 3.\label{18e7}
		\end{align}
	\end{theorem}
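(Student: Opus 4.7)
The plan is to deduce each congruence in Theorem \ref{congruencesrandom} as a direct application of Theorem \ref{mainthmRaduSellers}, exactly in the spirit in which Corollaries \ref{squarefreem}--\ref{coropq} are used to obtain Theorems \ref{congruences} and \ref{congruences5}. For a congruence of the form $a_p(mn+t') \equiv 0 \pmod 3$ appearing in the list, one fixes the data $(p,m)$, factors $m = p_1^{e_1}\cdots p_g^{e_g}$, and computes $\kappa$, $\hat p$, $\epsilon_2$, $\epsilon_p$ from the definitions in the excerpt.

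First I would determine the candidate $t \in \{0,1,\dots,m-1\}$ for which the divisibility $A_t \mid 2^{\epsilon_2}p^{\epsilon_p}p_1\cdots p_g$ holds. For instance, for \eqref{18e1} one has $p=5$, $m=8$, $\kappa=3$, $\hat p=1$, $\epsilon_2=0$, $\epsilon_p=1$, so $A_t = 8/\gcd(3(t+1),8)$ must divide $10$; a direct check shows $t=3$ qualifies. Similarly for \eqref{18e2} with $p=5$, $m=49$, one finds $A_t = 49/\gcd(t+1,49) \mid 35$ exactly when $7\mid t+1$, giving the six residues $\{6,13,20,27,34,41\}$. Analogous computations identify the relevant $t$'s for the remaining lines, and one then evaluates the orbit
\[
P(t)=\Big\{t' : \exists\, [s]_{24m} \text{ with } t' \equiv ts + (s-1)\hat p \pmod m\Big\}
\]
by letting $s$ run over the residue classes modulo $24m$ and reducing modulo $m$. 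This step is where one must be careful: one has to verify that the orbit $P(t)$ coincides (up to the listed representatives) with the set of $t'$ appearing in each displayed congruence, which is the combinatorial heart of the argument.

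Next I would compute the explicit finite bound
\[
N = \left\lfloor \tfrac{2^{\epsilon_2}(p+1)^{\epsilon_p}(p-1)(p_1+1)\cdots(p_g+1)}{24} - \tfrac{p^2-1}{24m}\right\rfloor
\]
supplied by Theorem \ref{mainthmRaduSellers}, and for every $t' \in P(t)$ and every $0 \le n \le N$ verify the congruence $a_p(mn+t') \equiv 0 \pmod 3$ directly from the generating function \eqref{1e1}, by expanding $(q^p;q^p)_\infty^{\,p}/(q;q)_\infty$ modulo $3$ to sufficiently high order. Once the finite check succeeds, Theorem \ref{mainthmRaduSellers} promotes it to a congruence valid for all $n\ge 0$. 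The same template handles each of \eqref{18e1}--\eqref{18e7}.

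The principal obstacle is computational bookkeeping rather than conceptual difficulty: in the harder cases, such as \eqref{18e5} where $(p,m)=(13,12)$ yields $N=83$, or \eqref{18e7} where $(p,m)=(19,76)$ yields $N\approx 89$, the number of initial values of $a_p$ to compute and verify is substantial, so in practice one needs a modest amount of symbolic computation to expand the generating function modulo $3$ far enough. A secondary subtlety is making sure that the orbit set $P(t)$ under the action $s\mapsto ts+(s-1)\hat p$ is computed with $s$ ranging correctly over $\mathbb{Z}_{24m}$, so that the listed residues in \eqref{18e2}, \eqref{18e3}, \eqref{18e4} and \eqref{18e7} are precisely covered and no superfluous $t'$ is missed.
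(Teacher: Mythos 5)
Your proposal takes essentially the same route as the paper: for each congruence one fixes $(p,m)$ and a suitable $t$ with $A_t\mid 2^{\epsilon_2}p^{\epsilon_p}p_1\cdots p_g$, computes the orbit $P(t)$ (the paper uses Sage), takes the explicit bound from Theorem \ref{mainthmRaduSellers}, verifies the finitely many initial cases by machine computation (the paper uses Mathematica), and concludes for all $n\ge 0$. The only differences are bookkeeping details and are harmless: for \eqref{18e2} the admissible residues with $7\mid t+1$ also include $t=48$, the listed residues being exactly the union $P(6)\cup P(20)$, and for $(p,m)=(13,12)$ your bound $83$ is what the theorem's formula literally gives (the paper tabulates a smaller value), which merely means checking more initial values.
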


	Alternate proofs of the congruences obtained in Theorems \ref{congruences}, \ref{congruences5} and \ref{congruencesrandom} can be found in Garvan \cite{Garvan1993} and Chen \cite{Chen2013}. Further Theorem \ref{mainthmRaduSellers} can be used to obtain an alternate proof of the congruences
\begin{align*}
	a_5(5n+4) &\equiv 0 \pmod {5},\\
	a_7(7n+5) &\equiv 0 \pmod {7},\\
	a_{11}(11n+6) &\equiv 0 \pmod {11}
\end{align*}
for $n\geq 0$, which coincide with the congruences \eqref{eq5711} with $j=1$.

	Our paper is of two folds. Firstly, we prove density results for the parity of the partition function $a_{3^{\alpha}m}(n)$ in Theorems \ref{mainthm1} and \ref{mainthm2} using modular form techniques and Serre's result. Secondly, we provide an algorithm using Radu and Seller's method and we also obtain some algebraic results in Theorems \ref{congruences} and \ref{congruencesrandom} which supports our analytical results.
	
	\section{Preliminaries}
	We recall some basic facts and definition on modular forms. For more details, one can see \cite{Koblitz}, \cite{Ono2004}. We start with some matrix groups. We define
	\begin{align*}
		\Gamma:=\mathrm{SL_2}(\mathbb{Z})= &\left\{ \begin{bmatrix}
			a && b \\c && d
		\end{bmatrix}: a, b, c, d \in \mathbb{Z}, ad-bc=1 \right\},\\
		\Gamma_{\infty}:= &\left\{\begin{bmatrix}
			1 &n\\ 0&1	\end{bmatrix}: n \in \mathbb{Z}\right\}.
	\end{align*}
	For a positive integer $N$, we define
	\begin{align*}
		\Gamma_{0}(N):=& \left\{ \begin{bmatrix}
			a && b \\c && d
		\end{bmatrix} \in \mathrm{SL_2}(\mathbb{Z}) : c\equiv0 \pmod N \right\},\\
		\Gamma_{1}(N):=& \left\{ \begin{bmatrix}
			a && b \\c && d
		\end{bmatrix} \in \Gamma_{0}(N) : a\equiv d  \equiv 1 \pmod N \right\}
	\end{align*}
	and 
	\begin{align*}
		\Gamma(N):= \left\{ \begin{bmatrix}
			a && b \\c && d
		\end{bmatrix} \in \mathrm{SL_2}(\mathbb{Z}) : a\equiv d  \equiv 1 \pmod N,  b \equiv c  \equiv 0 \pmod N \right\}.
	\end{align*}
	A subgroup of $\Gamma=\mathrm{SL_2}(\mathbb{Z})$ is called a congruence subgroup if it contains $ \Gamma(N)$ for some $N$ and the smallest $N$ with this property is called its level. Note that $ \Gamma_{0}(N)$ and $ \Gamma_{1}(N)$ are congruence subgroup of level $N,$ whereas $ \mathrm{SL_2}(\mathbb{Z}) $ and $\Gamma_{\infty}$ are congruence subgroups of level $1.$ The index of $\Gamma_0(N)$ in $\Gamma$ is 
	\begin{align*}
		[\Gamma:\Gamma_0(N)]=N\prod\limits_{p|N}\left(1+\frac 1p\right)
	\end{align*}
	where $p$ runs over the prime divisors of $N$.
	
	Let $\mathbb{H}$ denote the upper half of the complex plane $\mathbb{C}$. The group 
	\begin{align*}
		\mathrm{GL_2^{+}}(\mathbb{R}):= \left\{ \begin{bmatrix}
			a && b \\c && d
		\end{bmatrix}: a, b, c, d \in \mathbb{R}, ad-bc>0 \right\},
	\end{align*}
	acts on $\mathbb{H}$ by $ \begin{bmatrix}
		a && b \\c && d
	\end{bmatrix} z = \frac{az+b}{cz+d}.$ We identify $\infty$ with $\frac{1}{0}$ and define $ \begin{bmatrix}
		a && b \\c && d
	\end{bmatrix} \frac{r}{s} = \frac{ar+bs}{cr+ds},$ where $\frac{r}{s} \in \mathbb{Q} \cup \{ \infty\}$. This gives an action of $\mathrm{GL_2^{+}}(\mathbb{R})$ on the extended half plane $\mathbb{H}^{*}=\mathbb{H} \cup \mathbb{Q} \cup \{\infty\}$. Suppose that $\Gamma$ is a congruence subgroup of $\mathrm{SL_2}(\mathbb{Z})$. A cusp of $\Gamma$ is an equivalence class in $\mathbb{P}^{1}=\mathbb{Q} \cup \{\infty\}$ under the action of $\Gamma$.
	
	The group $\mathrm{GL_2^{+}}(\mathbb{R})$ also acts on functions $g:\mathbb{H} \rightarrow \mathbb{C}$. In particular, suppose that $\gamma=\begin{bmatrix}
		a && b \\c && d
	\end{bmatrix}\in \mathrm{GL_2^{+}}(\mathbb{R})$. If $g(z)$ is a meromorphic function on $\mathbb{H}$ and $k$ is an integer, then define the slash operator $|_{k}$ by
	\begin{align*}
		(g|_{k} \gamma)(z):= (\det \gamma)^{k/2} (cz+d)^{-k} g(\gamma z).
	\end{align*}
	
	\begin{definition}
		Let $\Gamma$ be a congruence subgroup of level $N$. A holomorphic function $g:\mathbb{H} \rightarrow \mathbb{C}$ is called a modular form with integer weight $k$ on $\Gamma$ if the following hold:
		\begin{enumerate}[$(1)$]
			\item We have
			\begin{align*}
				g \left( \frac{az+b}{cz+d}\right)=(cz+d)^{k} g(z)
			\end{align*}
			for all $z \in \mathbb{H}$ and $\begin{bmatrix}
				a && b \\c && d
			\end{bmatrix}\in \Gamma$. 
			\item If $\gamma\in SL_2 (\mathbb{Z})$, then $(g|_{k} \gamma)(z)$ has a Fourier expnasion of the form
			\begin{align*}
				(g|_{k} \gamma)(z):= \sum \limits_{n\geq 0}a_{\gamma}(n) q_N^{n}
			\end{align*}
			where $q_N:=e^{2\pi i z /N}$.
		\end{enumerate}
	\end{definition}
	For a positive integer $k$, the complex vector space of modular forms of weight $k$ with respect to a congruence subgroup $\Gamma$ is denoted by $M_{k}(\Gamma)$.
	
	\begin{definition} \cite[Definition 1.15]{Ono2004}
		If $\chi$ is a Dirichlet character modulo $N$, then we say that a modular form $g \in M_{k}(\Gamma_1(N))$ has Nebentypus character $\chi$ if 
		\begin{align*}
			g \left( \frac{az+b}{cz+d}\right)=\chi(d) (cz+d)^{k} g(z)
		\end{align*}
		for all $z \in \mathbb{H}$ and $\begin{bmatrix}
			a && b \\c && d
		\end{bmatrix}\in \Gamma_{0}(N)$. The space of such modular forms is denoted by $M_{k}(\Gamma_0(N), \chi)$.
	\end{definition}
	
	The relevant modular forms for the results obtained in this article arise from eta-quotients. Recall that the Dedekind eta-function $\eta (z)$ is defined by 
	\begin{align*}
		\eta (z):= q^{1/24}(q;q)_{\infty}=q^{1/24} \prod\limits_{n=1}^{\infty} (1-q^n)
	\end{align*}
	where $q:=e^{2\pi i z}$ and $z \in \mathbb{H}$. A function $g(z)$ is called an eta-quotient if it is of the form
	\begin{align*}
		g(z):= \prod\limits_{\delta|N} \eta(\delta z)^{r_{\delta}}
	\end{align*}
	where $N$ and $r_{\delta}$ are integers with $N>0$. 
	
	\begin{theorem} \cite[Theorem 1.64]{Ono2004} \label{thm2.3}
		If $g(z)=\prod\limits_{\delta|N} \eta(\delta z)^{r_{\delta}}$ is an eta-quotient such that $k= \frac 12$ $\sum_{\delta|N} r_{\delta}\in \mathbb{Z}$, 
		\begin{align*}
			\sum\limits_{\delta|N} \delta r_{\delta} \equiv 0\pmod {24}	\quad \textrm{and} \quad \sum\limits_{\delta|N} \frac{N}{\delta}r_{\delta} \equiv 0\pmod {24},
		\end{align*}
		then $g(z)$ satisfies
		\begin{align*}
			g \left( \frac{az+b}{cz+d}\right)=\chi(d) (cz+d)^{k} f(z)
		\end{align*}
		for each $\begin{bmatrix}
			a && b \\c && d
		\end{bmatrix}\in \Gamma_{0}(N)$. Here the character $\chi$ is defined by $\chi(d):= \left(\frac{(-1)^{k}s}{d}\right)$ where $s=\prod_{\delta|N} \delta ^{r_{\delta}}$.
	\end{theorem}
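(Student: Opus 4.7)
The plan is to reduce the claim to the classical transformation law for the Dedekind $\eta$-function and then show that the two mod-$24$ conditions are precisely what is needed to convert the resulting product of roots of unity into the advertised Kronecker character.

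Step 1. Recall the eta transformation. For every $\gamma=\begin{bmatrix} a & b \\ c & d\end{bmatrix}\in \mathrm{SL}_2(\Z)$ with $c>0$, there is a $24$th root of unity $\varepsilon(\gamma)$ (given explicitly via a Dedekind sum, or for odd $c$ via $\left(\tfrac{d}{c}\right)$ times a power of $i$ depending on $c,d\pmod{4}$) such that
\begin{equation*}
\eta(\gamma z)=\varepsilon(\gamma)(cz+d)^{1/2}\eta(z).
\end{equation*}
This is the only analytic input required; the rest of the argument is formal manipulation and bookkeeping of $24$th roots of unity.

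Step 2. Rewrite each factor $\eta(\delta z)$ under $\gamma\in \Gamma_0(N)$. Since $\delta\mid N\mid c$, the matrix $\gamma_\delta:=\begin{bmatrix} a & b\delta \\ c/\delta & d\end{bmatrix}$ lies in $\mathrm{SL}_2(\Z)$ and satisfies $\delta\,(\gamma z)=\gamma_\delta(\delta z)$. Applying Step 1 to $\gamma_\delta$ gives
\begin{equation*}
\eta(\delta\gamma z)=\varepsilon(\gamma_\delta)(cz+d)^{1/2}\eta(\delta z).
\end{equation*}
Taking the product over $\delta\mid N$ with the exponents $r_\delta$ and using the integrality assumption $k=\tfrac12\sum r_\delta\in\Z$ to make sense of the fractional power, I obtain
\begin{equation*}
g(\gamma z)=\Bigl(\prod_{\delta\mid N}\varepsilon(\gamma_\delta)^{r_\delta}\Bigr)(cz+d)^{k}g(z).
\end{equation*}
So everything is reduced to identifying the overall root of unity $v(\gamma):=\prod_{\delta\mid N}\varepsilon(\gamma_\delta)^{r_\delta}$.

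Step 3. Evaluate $v(\gamma)$ using an explicit form of $\varepsilon$. Writing
\begin{equation*}
\varepsilon(\gamma_\delta)=\exp\!\left(\pi i\!\left(\tfrac{a+d}{12\,(c/\delta)}-s(d,c/\delta)-\tfrac14\right)\right),
\end{equation*}
the logarithm of $v(\gamma)$ splits into three pieces: a linear combination of $\tfrac{a+d}{12}\cdot \tfrac{\delta}{c}$, a linear combination of Dedekind sums $s(d,c/\delta)$, and the global factor coming from $-\tfrac14\sum r_\delta=-k/2$. By the standard reciprocity and multiplication formulas for Dedekind sums (applied divisor-by-divisor and regrouped using $c=Nc'$), each term can be rearranged as a polynomial in $d$ with coefficients that are rational multiples of $\sum \delta r_\delta$ or $\sum (N/\delta)r_\delta$. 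The hypothesis $\sum \delta r_\delta\equiv 0\pmod{24}$ kills the $a+d$-type contribution, while $\sum (N/\delta)r_\delta\equiv 0\pmod{24}$ kills the Dedekind-sum contribution; this is the precise reason for the two congruences.

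Step 4. Identify the surviving factor as the Kronecker symbol. What remains after the cancellation is a product over $\delta\mid N$ of $\left(\tfrac{d}{c/\delta}\right)^{r_\delta}$ times a sign depending on $k$ and on $s=\prod \delta^{r_\delta}$. Using quadratic reciprocity for the Kronecker symbol and the multiplicativity $\left(\frac{\cdot}{c/\delta}\right)$, this product collapses to $\left(\frac{(-1)^{k}s}{d}\right)$, which is $\chi(d)$ as defined. This yields the transformation law claimed, and holomorphicity on $\mathbb H$ together with the final identity finishes the proof.

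The main obstacle is Step 3/4: the tight bookkeeping of the Dedekind-sum and $i$-power contributions. The two divisibility conditions look mysterious a priori, but their role becomes transparent as soon as one writes out $\log v(\gamma)$ and collects coefficients; they are exactly the linear conditions on $(r_\delta)$ that force the non-character part to vanish. Once this cancellation is carried out, matching the remaining Kronecker symbols with $\left(\frac{(-1)^k s}{d}\right)$ is a direct computation using quadratic reciprocity.
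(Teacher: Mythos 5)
This statement is not proved in the paper at all: it is quoted verbatim as \cite[Theorem 1.64]{Ono2004} (the classical Gordon--Hughes--Newman/Ligozat transformation law), so there is no internal proof to compare with; your proposal has to be judged on its own. Your Steps 1--2 are correct and are the standard reduction: for $\delta\mid N\mid c$ the matrix $\begin{bmatrix} a & b\delta \\ c/\delta & d\end{bmatrix}$ is in $\mathrm{SL}_2(\Z)$, satisfies $\delta(\gamma z)=\gamma_\delta(\delta z)$, and the eta multiplier reduces everything to evaluating $v(\gamma)=\prod_{\delta\mid N}\varepsilon(\gamma_\delta)^{r_\delta}$ (with the harmless branch bookkeeping for the half-integral powers since $k\in\Z$).

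The genuine gap is in Steps 3--4, which contain the entire content of the theorem but are only asserted. In particular, the claimed mechanism is not accurate: the ``$a+d$'' contribution to $\log v(\gamma)$ is $\pi i\,\frac{a+d}{12}\sum_{\delta\mid N} r_\delta\frac{\delta}{c}$, and knowing only $24\mid\sum_\delta \delta r_\delta$ does not make this a multiple of $2\pi i$, because of the $c$ in the denominator; likewise the Dedekind-sum terms $s(d,c/\delta)$ are rationals with denominators depending on $c/\delta$, so neither congruence ``kills'' its piece term-by-term. The cancellation is a joint, global one, and carrying it out is exactly the hard computation: one either uses the explicit Petersson/Newman formula expressing $\varepsilon(\gamma_\delta)$ through Jacobi symbols (with a case analysis on the parities of $c/\delta$ and $d$, and care because $d$ need not be odd, which is why the Kronecker symbol appears), or one verifies the character identity on a set of generators of $\Gamma_0(N)$; the final identification of the surviving product with $\left(\frac{(-1)^k s}{d}\right)$ via quadratic reciprocity must then be done explicitly, including the sign $(-1)^k$ coming from the $e^{-\pi i r_\delta/4}$ factors. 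As written, your argument is a correct strategic outline of the known proof (essentially Ligozat's), but the decisive Steps 3--4 are waved at, and the specific cancellation claim you make would fail if taken literally.
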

	
	\begin{theorem} \cite[Theorem 1.65]{Ono2004} \label{thm2.4}
		Let $c,d$ and $N$ be positive integers with $d|N$ and $\gcd(c,d)=1$. If $f$ is an eta-quotient satisfying the conditions of Theorem \ref{thm2.3} for $N$, then the order of vanishing of $f(z)$ at the cusp $\frac{c}{d}$ is
		\begin{align*}
			\frac{N}{24}\sum\limits_{\delta|N} \frac{\gcd(d, \delta)^2 r_{\delta}}{\gcd(d, \frac{N}{ d} )d \delta}.
		\end{align*}
	\end{theorem}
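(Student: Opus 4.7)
The plan is to reduce the claim to the transformation law of the Dedekind $\eta$-function under $\mathrm{SL}_2(\mathbb{Z})$. Recall that
\[
\eta\!\left(\frac{az+b}{cz+d}\right) = \varepsilon(a,b,c,d)\,(cz+d)^{1/2}\eta(z)
\]
for every $\begin{bmatrix} a & b \\ c & d\end{bmatrix}\in\mathrm{SL}_2(\mathbb{Z})$, where $\varepsilon$ is an explicit $24$th root of unity whose precise value will not affect the computation of an order of vanishing. Since the conditions of Theorem \ref{thm2.3} ensure that $f(z)=\prod_{\delta\mid N}\eta(\delta z)^{r_\delta}$ transforms with a character under $\Gamma_0(N)$, its order at any cusp of $\Gamma_0(N)$ is well defined.

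First I would fix a representative $c/d$ of the cusp (with $d\mid N$ and $\gcd(c,d)=1$) and choose a matrix $\gamma=\begin{bmatrix} c & \ast \\ d & \ast\end{bmatrix}\in\mathrm{SL}_2(\mathbb{Z})$ sending $\infty$ to $c/d$. The width of this cusp in $\Gamma_0(N)$ is $h=N/\bigl(d\cdot\gcd(d,N/d)\bigr)$, so the local uniformizer at $c/d$ is $q_h=e^{2\pi i z/h}$, and the order of vanishing equals the $q_h$-exponent of the leading term of $(f|_{k}\gamma)(z)$. The key step is then to compute, for each divisor $\delta\mid N$, the leading behaviour of $\eta(\delta\gamma z)$.

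The central manipulation is to write, for $g:=\gcd(d,\delta)$, the $2\times 2$ matrix $\begin{bmatrix}\delta c & \delta\ast\\ d & \ast\end{bmatrix}$ (of determinant $\delta$) as a product $\gamma'\,M$ with $\gamma'\in\mathrm{SL}_2(\mathbb{Z})$ and $M=\begin{bmatrix} \delta/g & \ast\\ 0 & g\end{bmatrix}$ an upper-triangular scaling matrix. This is the standard device for pushing the factor of $\delta$ through a modular matrix, and it produces the identity
\[
\eta(\delta \gamma z) \;=\; \varepsilon_\delta\,(cz+d)^{1/2}\,\eta\!\left(\frac{(\delta/g)z + \ast}{g}\right)
\]
for some $24$th root $\varepsilon_\delta$. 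Substituting the $q$-expansion $\eta(w)=e^{2\pi i w/24}\prod(1-e^{2\pi i n w})$ shows that this factor contributes a leading $q_h$-exponent equal to $\frac{h}{24}\cdot\frac{g^2}{d\delta}=\frac{N}{24\gcd(d,N/d)}\cdot\frac{\gcd(d,\delta)^2}{d\delta}$.

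The proof finishes by summing these contributions with multiplicities $r_\delta$, since $(f|_k\gamma)(z)$ is the product of these factors times non-vanishing units at $c/d$. The outcome matches the stated formula
\[
\frac{N}{24}\sum_{\delta\mid N}\frac{\gcd(d,\delta)^2\,r_\delta}{\gcd(d,N/d)\,d\,\delta}.
\]
The main obstacle is the bookkeeping in the matrix decomposition $\gamma\cdot\mathrm{diag}(\delta,1)=\gamma'\,M$: one has to verify that $\gamma'$ really lies in $\mathrm{SL}_2(\mathbb{Z})$ and produce the correct choice of its entries (using that $\gcd(\delta c/g,\,d/g)=1$). Once this reduction is in place, the rest is mechanical: applying the $\eta$-transformation law, collecting $(cz+d)^{1/2}$ factors (which sum to $(cz+d)^{k}$ and cancel against the $|_k$ operator), and reading off the $q_h$-exponent.
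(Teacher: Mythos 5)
A point of comparison first: the paper does not prove this statement at all --- it is quoted as a black box from Ono's book (Theorem 1.65, a result of Ligozat type), so there is no ``paper proof'' to match and your attempt must stand on its own. Your route --- slash with $\gamma=\begin{bmatrix} c & b\\ d& e\end{bmatrix}$ sending $\infty$ to $c/d$, factor $\begin{bmatrix}\delta & 0\\ 0& 1\end{bmatrix}\gamma$ as an $\mathrm{SL}_2(\Z)$ matrix times an upper-triangular matrix, apply the $\eta$-transformation law, read off the leading exponent, and rescale by the cusp width $h=N/\bigl(d\gcd(d,N/d)\bigr)$ --- is indeed the standard argument, and your identification of Ono's formula as the order measured in the local uniformizer $q_h$ is correct.

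However, the central manipulation is written incorrectly, and followed literally it fails. With $g=\gcd(\delta,d)$ the correct decomposition is $\begin{bmatrix}\delta c & \delta b\\ d & e\end{bmatrix}=\gamma'\begin{bmatrix} g & x\\ 0 & \delta/g\end{bmatrix}$, where $\gamma'\in \mathrm{SL}_2(\Z)$ has first column $(\delta c/g,\, d/g)^{T}$ --- this is exactly where the coprimality $\gcd(\delta c/g, d/g)=1$ that you invoke is needed. Your $M=\begin{bmatrix}\delta/g & \ast\\ 0 & g\end{bmatrix}$ would force the first column of $\gamma'$ to be $(cg,\, dg/\delta)^{T}$, which is not integral unless $(\delta/g)\mid d$ (already false for, say, $\delta=8$, $d=2$), and it would give the leading $q$-exponent $\delta/(24g^{2})$ instead of the correct $g^{2}/(24\delta)$; the reciprocal does not sum to the stated formula. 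Relatedly, your intermediate expression $\frac{h}{24}\cdot\frac{g^{2}}{d\delta}$ is inconsistent with your own $h$: the contribution in $q_h$ is $h\cdot\frac{g^{2}}{24\delta}=\frac{N\gcd(d,\delta)^{2}}{24\,d\,\gcd(d,N/d)\,\delta}$, so your displayed equality only holds because you substituted the known answer on the right (there is a stray factor of $d$ on the left). A minor further slip: the automorphy factor in the identity for $\eta(\delta\gamma z)$ should involve $dz+e$ (the bottom row of your $\gamma$), not $cz+d$, though this does not affect orders of vanishing. With the triangular factor corrected and the exponent bookkeeping redone as above, your argument goes through and proves the theorem.
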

	Suppose that $g(z)$ is an eta-quotient satisfying the conditions of Theorem \ref{thm2.3} and that the associated weight $k$ is a positive integer. If $g(z)$ is holomorphic at all of the cusps of $\Gamma_0(N)$, then $g(z) \in M_{k}(\Gamma_0(N), \chi)$. Theorem \ref{thm2.4} gives the necessary criterion for determining orders of an eta-quotient at cusps. In the proofs of our results, we use Theorems \ref{thm2.3} and \ref{thm2.4} to prove that $g(z) \in M_{k}(\Gamma_0(N), \chi)$ for certain eta-quotients $g(z)$ we consider in the sequel.
	
	We shall now mention a result of Serre \cite[P. 43]{Serre1974} which will be used later. 
	
	\begin{theorem}\label{thm2.5}
		Let $g(z) \in M_{k}(\Gamma_{0}(N), \chi)$ has Fourier expansion
		$$ g(z)= \sum_{n=0}^{\infty} b(n) q^n \in \mathbb{Z}[[q]].$$
		Then for a positive integer $r$,  there is a constant $\alpha>0$ such that
		$$\#\{ 0 < n \leq X: b(n) \not \equiv 0 \pmod{r}\} = \mathcal{O}\left( \frac{X}{(\log X)^{\alpha}}\right).$$
		Equivalently 
		\begin{align}\label{2e1}
			\begin{split}
				\lim\limits_{X \to \infty} \frac{\#\{ 0 < n \leq X: b(n) \not \equiv 0 \pmod{r}\}}{X}= 0.
		\end{split}	\end{align}
	\end{theorem}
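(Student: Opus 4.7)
The plan is to follow Serre's original approach from \cite{Serre1974}. First, by the Chinese Remainder Theorem it suffices to treat the case $r = \ell^j$ where $\ell$ is a prime. After possibly multiplying by a suitable power series, we may also assume that $g(z)$ is a cusp form (the Eisenstein part can be handled separately, or one can absorb it into the lattice calculation).

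Next, I would study the Hecke action. For primes $p \nmid N\ell$ the Hecke operator $T_p$ acts on $M_k(\Gamma_0(N),\chi)$ and satisfies the familiar identity
\begin{align*}
T_p g \;=\; \sum_{n \geq 0}\bigl(b(pn) + \chi(p)\,p^{k-1} b(n/p)\bigr)\, q^n .
\end{align*}
Since $M_k(\Gamma_0(N),\chi)$ is finite dimensional over $\mathbb{C}$ and admits a $\mathbb{Z}$-lattice, the $\mathbb{Z}/r\mathbb{Z}$-module $\mathcal{M}$ generated by $\{T_{p_1}T_{p_2}\cdots T_{p_s} g \bmod r\}$ is a finitely generated module over a finite commutative ring, hence finite. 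The Hecke algebra $\mathbb{T}$ acting on $\mathcal{M}$ is therefore a finite ring, and its residue fields at maximal ideals are finite.

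The key step is then to produce a set $S$ of primes of positive density $\delta > 0$ such that for every $p \in S$ and every $n$ with $\gcd(n,p)=1$, one has $b(pn) \equiv 0 \pmod{r}$. This is done by attaching to each residual representation $\overline{\rho}_{\mathfrak{m}}$ on $\mathcal{M}/\mathfrak{m}$ its Galois representation (via Deligne), then selecting primes $p$ whose Frobenius trace vanishes modulo $\mathfrak{m}$; the Chebotarev density theorem gives such primes in positive density. For these $p$, one reads off from the Hecke relation that $b(p \cdot n) \equiv -\chi(p)p^{k-1} b(n/p) \pmod{r}$, which after iterating forces $b(n) \equiv 0 \pmod r$ whenever $n$ has sufficiently many prime factors drawn from $S$.

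Finally, I would combine this with the classical Landau--Selberg estimate: the number of integers $n \leq X$ all of whose prime factors lie outside a set of primes of density $\delta$ is $O\bigl(X/(\log X)^{\delta}\bigr)$. Since any $n$ with $b(n) \not\equiv 0 \pmod r$ must essentially avoid prime factors in $S$, the theorem follows, with $\alpha = \delta$. The main obstacle is the third step: producing the positive-density set $S$, which rests on the deep construction of mod-$\ell$ Galois representations attached to modular forms and on Chebotarev's theorem, both of which are nontrivial external inputs.
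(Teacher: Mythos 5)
The paper does not prove this statement at all: Theorem \ref{thm2.5} is quoted as a known result of Serre \cite{Serre1974} (see also \cite{Serre1974a}), so there is no internal proof to compare against. Your sketch is, in outline, a faithful reconstruction of Serre's original argument: reduce to a prime-power modulus, use finiteness of the Hecke module generated by $g$ modulo $r$, invoke Deligne's mod-$\ell$ Galois representations plus a density theorem (in fact primes $p\equiv -1\pmod{N\ell^j}$ already work, so Dirichlet suffices in place of full Chebotarev) to get a positive-density set $S$ of primes whose Hecke operators $T_p$ lie in every maximal ideal of the finite Hecke algebra, and conclude with a Landau--Serre count of integers having few prime factors from $S$. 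Two points need repair. First, the opening reduction ``multiply by a suitable power series so that $g$ is a cusp form'' does not work as stated: multiplication convolves Fourier coefficients and does not preserve the set of $n$ with $b(n)\not\equiv 0\pmod r$; Serre treats the whole space, the Eisenstein constituents corresponding to reducible representations for which the same trace-zero-Frobenius condition (e.g.\ $p\equiv-1\pmod{N\ell^j}$) holds. Second, ``trace of Frobenius vanishes modulo every $\mathfrak m$'' only places $T_p$ in the Jacobson radical, so $T_p$ acts nilpotently rather than as zero; your phrase about iterating until $n$ has sufficiently many prime factors from $S$ is the correct fix, but then the final count must bound the integers with fewer than that fixed number of prime factors from $S$ dividing them exactly once, which costs only powers of $\log\log X$ and still gives $O\left(X/(\log X)^{\alpha}\right)$. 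With these adjustments your outline matches Serre's proof; as you say, it rests on deep external inputs (Deligne's representations, density theorems) which the paper, reasonably, simply cites.
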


	We finally recall the definition of Hecke operators and a few relavent results. Let $m$ be a positive integer and $g(z)= \sum \limits_{n= 0}^{\infty}a(n) q^{n}\in M_{k}(\Gamma_0(N), \chi)$. Then the action of Hecke operator $T_m$ on $f(z)$ is defined by
	\begin{align*}
		g(z)|T_{m} := \sum \limits_{n= 0}^{\infty} \left(\sum \limits_{d|\gcd(n,m)} \chi(d) d^{k-1} a\left(\frac{mn}{d^2}\right)\right)q^{n}.
	\end{align*}
	In particular, if $m=p$ is a prime, we have
	\begin{align*}
		g(z)|T_p := \sum \limits_{n= 0}^{\infty}\left( a(pn) + \chi(p) p^{k-1} a\left(\frac{n}{p}\right)\right)q^{n}.
	\end{align*}
	We note that $a(n)=0$ unless $n$ is a non-negative integer.

	%
	
	\section{Proof of Theorem \ref{mainthm1}} \label{pfmainthm1}
	We put $t= 3^{\alpha}m$ in $\eqref{1e1}$ to obtain
	\begin{align}\label{3e1}
		\sum_{n=0}^{\infty} a_{3^{\alpha}m}(n) q^n &= \frac{(q^{3^{\alpha}m};q^{3^{\alpha}m})^{3^{\alpha}m}_{\infty}}{(q;q)_{\infty}}.
	\end{align}
	We define
	$$A_{\alpha, m}(z):= \frac{\eta^{2}(2^3 3^{\alpha +1} m z)}{\eta(2^4 3^{\alpha +1} m z)} = \prod_{n=1}^{\infty} \frac{\left(1-q^{2^3 3^{\alpha +1} mn}\right)^2}{\left(1-q^{2^4 3^{\alpha +1} mn}\right)}.$$
	Note that for any prime $p$ and positive integer $j,$  we have
	\begin{equation}\label{3e2}
		(q;q)_{\infty}^{p^j}\equiv (q^p;q^p)_{\infty}^{p^{j-1}} \pmod{p^j}.
	\end{equation}
	Using the above formula, we get
	\begin{equation}\label{3e3}
		A^{2^j}_{\alpha, m}(z)= \frac{\eta^{2^{j+1}}(2^3 3^{\alpha +1} m z)}{\eta^{2^j}(2^4 3^{\alpha +1} mz)} \equiv 1 \pmod {2^{j+1}}.
	\end{equation}
	We define 
	\begin{equation}\label{eq502a}
		B_{\alpha, m, j}(z)= \frac{\eta^{3^{\alpha}m}(2^3 3^{\alpha +1} m z)}{\eta(24z)}	A^{2^j}_{\alpha, m}(z)=  \frac{\eta^{3^{\alpha}m+ 2^{j+1}}(2^3 3^{\alpha +1} m z)}{ \eta(24z) \eta^{2^j}(2^4 3^{\alpha +1} mz)}. 
	\end{equation}	
	Using \eqref{3e3}, we have
	\begin{align}\label{eq503}
		B_{\alpha, m, j}(z) \equiv \frac{\eta^{3^{\alpha}m}(2^3 3^{\alpha +1} m z)}{\eta(24z)}\equiv q^{3^{2 \alpha}m^2 -1} \frac{\left(q^{2^3 3^{\alpha +1} m}; q^{2^3 3^{\alpha +1} m}\right)_{\infty}^{3^{\alpha}m}}{\left(q^{24};q^{24}\right)_{\infty}} \pmod {2^{j+1}}.
	\end{align}
	From \eqref{3e1} and \eqref{eq503}, we obtain
	\begin{align}\label{3e6}
		B_{\alpha, m, j}(z) \equiv \sum_{n=0}^{\infty} a_{3^{\alpha}m}(n) q^{24n+ 3^{2 \alpha}m^2 -1} \pmod{2^{j+1}}.
	\end{align}
	Next, we prove that $B_{\alpha, m, j}(z)$ is a modular form. Applying Theorem \ref{thm2.3}, we first estimate the level of eta quotient $B_{\alpha, m, j}(z)$ . The level of $ B_{\alpha, m, j}(z) $ is $N=2^43^{\alpha+1}m M,$ where $M$ is the smallest positive integer which satisfies 
	$$ 2^43^{\alpha+1}m M \left[ \frac{3^{\alpha}m+2^{j+1}}{2^33^{\alpha+1}m} - \frac{1}{2^3 3} - \frac{2^{j}}{2^43^{\alpha+1}m} \right]\equiv 0 \pmod{24} \implies 3\cdot2^j M \equiv 0 \pmod{24}.  $$
	Therefore $M=4$ and the level of $B_{\alpha, m, j}(z)$  is $N=2^6 3^{\alpha+1}m$. The cusps of $\Gamma_{0}(2^6 3^{\alpha+1}m)$ are given by fractions $\frac{c}{d}$ where $d|2^6 3^{\alpha+1}m$ and $\gcd(c,d)=1.$ By using Theorem \ref{thm2.4}, we have that $B_{\alpha, m, j} $ is holomorphic at a cusp $\frac{c}{d}$ if and only if 
	\begin{align*}
		& \left(3^{\alpha}m+2^{j+1}\right)\frac{\gcd^2(d, 2^33^{\alpha+1}m)}{2^33^{\alpha+1}m}- \frac{\gcd^2(d, 24)}{24} -2^j \frac{\gcd^2(d, 2^43^{\alpha+1}m)}{2^43^{\alpha+1}m} \geq 0 \\
		& \iff L:= 2 \left(3^{\alpha}m+2^{j+1}\right) G_1 - 2\cdot 3^{\alpha}m G_2- 2^j \geq 0,
	\end{align*}
	where $G_1= \frac{\gcd^2(d, 2^3 3^{\alpha+1}m)}{\gcd^2(d, 2^4 3^{\alpha+1}m)}$ and  $G_2= \frac{\gcd^2(d, 24)}{\gcd^2(d, 2^43^{\alpha+1}m)}.$ 	 Let $d$ be a divisor of $ 2^6 3^{\alpha+1}m$. We can write $d= 2^{r_1} 3^{r_2} t$ where $ 0 \leq r_1 \leq 6$,   $ 0 \leq r_2 \leq \alpha+1$ and $t|m$. We now consider the following two cases depending on $r_1$.
	
	\noindent Case 1: Let $ 0 \leq r_1 \leq 3,$ $ 0 \leq r_2 \leq \alpha+1$. Then $G_1= 1$ and $\frac{1}{3^{2\alpha}t^2} \leq G_2 \leq  1$ which implies
	$ L \geq  2 \left(3^{\alpha}m+2^{j+1}\right)  - 2\cdot 3^{\alpha}m - 2^j = 3\cdot2^j > 0.$
	
	\noindent Case 2: Let $ 4 \leq r_1 \leq 6$, $ 0 \leq r_2 \leq \alpha+1$. Then $G_1=\frac{1}{4}$ and $\frac{1}{4 3^{2\alpha}t^2} \leq G_2 \leq  \frac{1}{4}$. This gives
	$L= 2 \left(3^{\alpha}m+2^{j+1}\right) G_1 - 2\cdot 3^{\alpha}m G_2- 2^j  \geq \frac{3^{\alpha}m}{2}+ 2^j -\frac{3^{\alpha}m}{2} - 2^j=0 .$ 
	
	Therefore, $B_{\alpha, m, j}(z)$ is holomorphic at every cusp $\frac{c}{d}.$ Using Theorem \ref{thm2.3}, we estimate that the weight of $B_{\alpha, m, j}(z)$ is $k= \frac{3^{\alpha}m-1}{2}+ 2^{j-1}$ which is a positive integer. The associated character for $B_{\alpha, m, j}(z)$ is $$\chi= \left( \frac{(-1)^{\frac{3^{\alpha}m-1}{2}+ 2^{j-1}} 2^{3^{\alpha+1}m+2^{j+1}-3} 3^{(\alpha+1)(3^{\alpha}m+2^j)-1} m^{3^{\alpha}m+2^j}}{\bullet}\right).$$ 
	Thus, $ B_{\alpha, m, j}(z) \in  M_{k}(\Gamma_{0}(N), \chi)$ where $k$, $N$ and $\chi$ are as above. Applying Theorem \ref{thm2.5}, we obtain that the Fourier coefficients of $ B_{\alpha, m, j}(z)$ satisfies \eqref{2e1} which implies that the Fourier coefficient of $B_{\alpha, m, j}(z)$ are almost always divisible by $r= 2^j$. Hence, from $\eqref{3e6}$, we conclude that $a_{3^{\alpha}m}(n)$ are almost always divisible by $2^j$. This completes the proof of Theorem \ref{mainthm1}.
	\qed	
	
	\section{Proof of Theorem \ref{mainthm2}}
	We follow the same approach as in the proof of Theorem \ref{mainthm1}.
	Here we define
	$$C_{\alpha, m}(z):= \frac{\eta^{3}(2^3 3^{\alpha +1} m z)}{\eta(2^3 3^{\alpha +2} m z)} = \prod_{n=1}^{\infty} \frac{\left(1-q^{2^3 3^{\alpha +1} mn}\right)^3}{\left(1-q^{2^3 3^{\alpha +2} mn}\right)}$$
	and then using \eqref{3e2} for $p=3$, we get
	\begin{align*}
		C^{3^j}_{\alpha, m}(z)= \frac{\eta^{3^{j+1}}(2^3 3^{\alpha +1} m z)}{\eta^{3^j}(2^3 3^{\alpha +2} mz)} \equiv 1 \pmod {3^{j+1}}.
	\end{align*}
	Next we consider the eta-quotient
	\begin{align*}
		D_{\al, m, j}(z)
		= \frac{\eta^{3^{\alpha}m}(2^3 3^{\alpha +1} m z)}{\eta(2^33z)}	C^{3^j}_{\alpha, m}(z)
		=  \frac{\eta^{3^{\alpha}m+ 3^{j+1}}(2^3 3^{\alpha +1} m z)}{ \eta(2^33z) \eta^{3^j}(2^3 3^{\alpha +2} mz)}. 
	\end{align*}
	From \eqref{1e1}, we obtain
	\begin{align*}
		\sum_{n=0}^{\infty} a_{3^{\alpha}m}(n) q^{24n} &= \frac{(q^{2^33^{\alpha+1}m};q^{2^33^{\alpha+1}m})^{3^{\alpha}m}_{\infty}}{(q^{2^33};q^{2^33})_{\infty}}=q^{1-3^{2\al}m^2}\frac{\eta^{3^{\al}m}(2^3 3^{\alpha +1} m z)}{\eta(2^3 3 z)}.
	\end{align*}
	Therefore, from the above discussion we conclude that
	\begin{align}
		D_{\alpha, m, j}(z) \equiv \sum_{n=0}^{\infty} a_{3^{\alpha}m}(n) q^{24n+ 3^{2 \alpha}m^2 -1} \pmod{3^{j+1}}.
	\end{align}
	Hence, to prove Theorem \ref{mainthm2}, it is enough to prove that the Fourier coefficients of $D_{\alpha, m, j}(z)$ are almost always divisible by $r= 3^j$. We first prove that $D_{\alpha, m, j}(z)$ is a modular form. Using Theorem \ref{thm2.3}, we find that the level of eta quotient $D_{\alpha, m, j}(z)$ is equal to $N=2^33^{\alpha+2}m M,$ where $M$ is the smallest positive integer which satisfies 
	\begin{align}
		2^33^{\alpha+2}m M \left[ \frac{3^{\alpha}m+3^{j+1}}{2^33^{\alpha+1}m} - \frac{1}{2^3 3} - \frac{3^{j}}{2^33^{\alpha+2}m} \right]\equiv 0 \pmod{24} \implies 2^3 3^j M \equiv 0 \pmod{24}. 
	\end{align}
	Therefore $M=1$ and the level of $D_{\alpha, m, j}(z)$  is $N=2^3 3^{\alpha+2}m.$
	The cusps of $\Gamma_{0}(2^3 3^{\alpha+2}m)$ are given by fractions $\frac{c}{d}$ where $d|2^3 3^{\alpha+2}m$ and $\gcd(c,d)=1.$ By using Theorem \ref{thm2.4}, we have that $B_{\alpha, m, j} $ is holomorphic at a cusp $\frac{c}{d}$ if and only if 
	\begin{align*}
		& \left(3^{\alpha}m+3^{j+1}\right)\frac{\gcd^2(d, 2^33^{\alpha+1}m)}{2^33^{\alpha+1}m}- \frac{\gcd^2(d, 24)}{24} -3^j \frac{\gcd^2(d, 2^33^{\alpha+2}m)}{2^33^{\alpha+2}m} \geq 0 \\
		& \iff L:= \left(3^{\al+1}m+3^{j+2}\right) G_1 - 3^{\al+1}m G_2- 3^j \geq 0,
	\end{align*}
	where $G_1= \frac{\gcd^2(d, 2^3 3^{\al+1}m)}{\gcd^2(d, 2^3 3^{\al+2}m)}$ and  $G_2= \frac{\gcd^2(d, 24)}{\gcd^2(d, 2^33^{\al+2}m)}$. Let $d$ be a divisor of $2^3 3^{\al+2}m$. We write $d= 2^{r_1} 3^{r_2} t$  where $ 0 \leq r_1 \leq 3,$   $ 0 \leq r_2 \leq \al+2$ and $t|m$. We now consider the following two cases depending on $r_2$.
	
	\noindent Case 1: Let $ 0 \leq r_1 \leq 3,$ $0 \leq r_2 \leq \alpha+1$. Then $G_1= 1$ and $\frac{1}{3^{2\alpha}t^2} \leq G_2 \leq  1$. Therefore $L = 3^{\al+1}m+3^{j+2} - 3^{\alpha+1}m G_2- 3^j \geq  3^{\al+1}m+3^{j+2}-  3^{\al+1}m - 3^j = 2\cdot3^j > 0$.
	
	\noindent Case 2: Let $ 0 \leq r_1 \leq 3,$ ${r_2}=\al+2$. Then $G_1=\frac{1}{9},$ $\frac{1}{ 3^{2\alpha+2}t^2} \leq G_2 \leq  \frac{1}{9}.$  Hence, we have
	$ L= (3^{\al+1}m+3^{j+2}) G_1 - 3^{\al+1}m G_2- 3^j  \geq 3^{\alpha-1}m+ 3^j -3^{\alpha-1}m - 3 ^j=0 .$ 
	
	This proves that $D_{\alpha, m, j}(z)$ is holomorphic at every cusp $\frac{c}{d}$. Applying Theorem \ref{thm2.3}, we obtain that the weight of $D_{\al, m, j}(z)$ is $k=\frac{3^{\al}m-1}{2}+ 3^{j}$ which is a positive integer. The associated character for $D_{\alpha, m, j}(z)$ is $$\chi= \left( \frac{(-1)^{\frac{3^{\alpha}m-1}{2}+ 3^{j}} 2^{3^{\al+1}m+2\cdot3^{j+1}-3} 3^{(\al+1)3^{\al}m+(2\al+1)3^j-1} m^{3^{\al}m+2\cdot 3^j}}{\bullet}\right).$$ 
	Thus, $ D_{\alpha, m, j}(z) \in  M_{k}(\Gamma_{0}(N), \chi)$ where $k$, $N$ and $\chi$ are as above. Applying a deep result of Serre (Theorem \ref{thm2.5}), we obtain that the Fourier coefficients of $ D_{\alpha, m, j}(z)$ satisfies \eqref{2e1}. Hence the Fourier coefficients of $D_{\alpha, m, j}(z)$ are almost always divisible by $r= 3^j$. This completes the proof.
	\qed

	\section{Proof of Theorem \ref{mainthm11}}
	Let $t=p_1^{a_1}p_2^{a_2}\ldots p_m^{a_m},$ where $p_i$'s
	are primes.
	From \eqref{1e1}, we get
	\begin{small}
		\begin{equation}\label{eq701}
			\sum_{n=0}^{\infty} a_{t}(n) q^n =  \frac{(q^{t};q^{t})^{t}_{\infty}}{(q;q)_{\infty}},
		\end{equation}
	\end{small}
	Note that for any prime $p$ and positive integers $j, k$  we have
	\begin{small}
		\begin{equation}\label{eq702}
			(q;q)_{\infty}^{p^j}\equiv (q^p;q^p)_{\infty}^{p^{j-1}} \pmod{p^j} \implies 	(q^k;q^k)_{\infty}^{p^j}\equiv (q^{kp};q^{kp})_{\infty}^{p^{j-1}} \pmod{p^j}.
		\end{equation}
	\end{small}
	For a positive integer $i,$ we define $$A_i(z):= \frac{\eta(24z)^{p_i^{a_i}}}{\eta \left(24p_i^{a_i}z\right)} .$$
	Using \eqref{eq702}, we get 
	\begin{small}
		\begin{equation*}
			A_i^{p_i^j}(z):= \frac{\eta(24z)^{p_i^{a_i+j}}}{\eta \left(24p_i^{a_i}z\right)^{p_i^j}} \equiv 1 \pmod{p_i^{j+1}}.
		\end{equation*}
	\end{small}
	Define 
	$$B_{i,j,t}(z)= \left(\frac{\eta(24t z)^{t}}{\eta(24z)}\right)A_i^{p_i^j}(z) =\frac{\eta^{t}(24t z) \eta^{(p_i^{a_i+j}-1)}(24z)}{\eta^{p_i^j} \left(24p_i^{a_i}z\right)} .$$
	On modulo $p_i^{j+1},$ we get
	\begin{small}
		\begin{equation}\label{eq703}
			B_{i,j,t}(z)= \frac{\eta(24t z)^{t}}{\eta(24z)}= q^{t^2-1} \frac{(q^{24t};q^{24t})^{t}_{\infty}}{(q^{24};q^{24})_{\infty}}
		\end{equation}
	\end{small}
	Combining \eqref{eq701} and \eqref{eq703} together, we obtain
	\begin{small}
		\begin{equation}\label{eq704}
			B_{i,j,t}(z)\equiv  q^{t^2-1} \frac{(q^{24t};q^{24t})^{t}_{\infty}}{(q^{24};q^{24})_{\infty}} \equiv \sum_{n=0}^{\infty} a_{t}(n)q^{24n+t^2-1} \pmod{p_i^{j+1}}
		\end{equation}
	\end{small}
	Next, we prove that $B_{i,j,t}(z)$ is a modular form. Applying Theorem \ref{thm2.3}, we first estimate the level of eta quotient $B_{i,j,\ell}(z)$ . The level of $ B_{i,j,t}(z) $ is $N=24p_1^{a_1}p_2^{a_2}\ldots p_m^{a_m} M,$ where $M$ is the smallest positive integer which satisfies 
	\begin{small}
		\begin{align*}
			24t M \left[ \frac{t}{24t} + \frac{p_i^{a_i+j}-1}{2^3 3} - \frac{p_i^{j}}{24p_i^{a_i}} \right]\equiv 0 \pmod{24} 
			\implies t   p_i^j M\left[p_i^{a_i}- \frac{1}{p_i^{a_i}} \right]\equiv 0 \pmod{24}.
		\end{align*}
	\end{small}
	Therefore $M=24$ and the level of $B_{i,j,t}(z)$  is $N=2^6 3^2t$. The cusps of $\Gamma_{0}(2^6 3^2t)$ are given by fractions $\frac{c}{d}$ where $d|2^6 3^2t$ and $\gcd(c,d)=1.$ By using Theorem \ref{thm2.4}, we have that $B_{i,j,t}(z) $ is holomorphic at a cusp $\frac{c}{d}$ if and only if 
	\begin{small}
		\begin{align*}
			&t \frac{  \gcd^2(d, 24 t)}{24 t}+ \left(p_i^{a_i+j}-1\right) \frac{\gcd^2(d, 24)}{24} -p_i^j \frac{\gcd^2(d, 24p_i^{a_i})}{24p_i^{a_i}} \geq 0 \\
			& \iff L:= 1 + \left(p_i^{a_i+j}-1\right)  G_1 -  \frac{p_i^j}{p_i^{a_i}} G_2 \geq 0,
		\end{align*}
	\end{small}
	where $G_1= \frac{\gcd^2(d, 24)}{\gcd^2(d, 24 t )}$ and  $G_2= \frac{\gcd^2(d, 24 p_i^{a_i})}{\gcd^2(d, 24 t)}.$ 
	Let $d$ be a divisor of $ 2^6 3^2t$. We can write $d= 2^{r_1} 3^{r_2} p_i^{s} \ell$ where $ 0 \leq r_1 \leq 6$,   $ 0 \leq r_2 \leq 2$,  $ 0 \leq s \leq a_i$  and $\ell|t$ but $p_i \nmid \ell$. It is immediate that $ G_1 = \frac{1}{p_i^{2s}\ell^2}$ and $  G_2 =\frac{1}{\ell^2}$. Therefore we have
	\begin{small}
		\begin{align*}
			L&= 1+ \frac{p_i^{a_i+j}-1}{p_i^{2s}\ell^2}  -  \frac{p_i^j}{p_i^{a_i}\ell^2}= 1 + \frac{p^j(p_i^{a_i} - p_i^{2s-a_i})-1}{p_i^{2s }\ell^2}.
		\end{align*}
	\end{small}
	Note that $s\leq a_i$ implies $p^{2s-a_i}\leq p^{a_i}$. Thus $p^j(p_i^{a_i} - p_i^{2s-a_i})\geq 0$. Hence 
	\begin{small}
		\begin{align*}
			L\geq 1-\frac{1}{p_i^{2s }\ell^2}\geq 0.
		\end{align*} 
	\end{small}
	Therefore, $B_{i,j,t}(z) $ is holomorphic at every cusp $\frac{c}{d}.$
	Using Theorem \ref{thm2.3}, we compute the weight of $B_{i,j,t}(z) $ is $k= \frac{t+ \left(p_i^{a_i+j}-1\right)-p_i^j}{2}=\frac{t+ p_i^j\left(p_i^{a_i}-1\right)-1}{2}$ which is a positive integer. The associated character for $B_{i,j,t}(z) $ is $$\chi= \left( \frac{(-1)^{k} \left(24t\right)^{t} 24^{\left(p_i^{a_i+j}-1\right)} \left(24p_i^{a_i}\right)^{-p_i^j}}{\bullet}\right).$$ 
	Thus $ B_{i,j,t}(z) \in  M_{k}(\Gamma_{0}(N), \chi)$ where $k$, $N$ and $\chi$ are as above. Applying Theorem \ref{thm2.3}, we obtain that the Fourier coefficients of $B_{i,j,t}(z) $ satisfies \eqref{2e1} which implies that the Fourier coefficient of $B_{i,j,t}(z) $ are almost always divisible by $ p_i^j$. Hence, from $\eqref{eq704}$, we conclude that $a_{t}(n)$ are almost always divisible by $p_i^j$. This completes the proof of Theorem \ref{mainthm11}.

		\section{Proof of Theorems \ref{mainthmRaduSellers}, \ref{congruences}, \ref{congruences5} and \ref{congruencesrandom}}
	\subsection{An algorithmic approach by Radu and Sellers}
	We begin with recalling an algorithm developed by Radu and Sellers \cite{Radu2011} that will be used to prove Theorem \ref{mainthmRaduSellers}. Let $M$ be a positive integer and let $R(M)$ denote the set of integers sequences $r=(r_\delta)_{\delta|M}$ indexed by the positive divisors of $M$. For $r \in R(M)$ and the positive divisors $1=\delta_1<\delta_2<\cdots<\delta_{i_M}=M$ of $M$,  we set $r=(r_{\delta_1},r_{\delta_2},\dots , r_{\delta_{i_M}})$. We define $c_r(n)$  by
	\begin{align*}
		\sum\limits_{n=0}^{\infty} c_r(n)q^n:=\prod\limits_{\delta|M}(q^{\delta};q^{\delta})_{\infty}^{r_{\delta}}=\prod\limits_{\delta|M}\prod\limits_{n=1}^{\infty}(1-q^{n\delta})^{r_{\delta}}.
	\end{align*}
	Radu and Sellers \cite{Radu2011} approach to prove congruences for $c_r(n)$ modulo a positive integer reduced the number of cases that we need to check as compared with the classical method which uses Sturm's bound alone.
	
	Let $m\geq0$ and $s$ be integers. We denote by $[s]_m$ the residue class of $s$ in $\mathbb{Z}_m$ and we denote by $\mathbb{S}_m$ the set of squares in $\mathbb{Z}_m^{*}$. For $t \in \{0,1, \dots, m-1\}$ and $r\in R(M)$, the subset $P_{m,r}(t)\subseteq \{0,1, \dots, m-1\}$ is defined as
	\begin{align*}
		P_{m,r}(t):=\left\{t^{'}: \exists [s]_{24m} \textrm{ such that } t^{'}\equiv ts+\frac{s-1}{24} \sum\limits_{\delta|M} \delta r_{\delta} \pmod m\right\}.
	\end{align*}
	
	\begin{definition}
		For positive integers $m$, $M$ and $N$,  let $r=(r_\delta)\in R(M)$ and $t \in \{0,1, \dots, m-1\}$. Let $\kappa=\kappa(m):=\gcd(m^2-1,24)$ and write
		\begin{align*}
			\prod\limits_{\delta|M} \delta^{|r_{\delta}|}=2^s\cdot j,
		\end{align*}
		where $s$ and $j$ are non-negative integers with $j$ odd. The set $\Delta^{*}$ is the collection of all tuples $(m, M, N, (r_{\delta}), t)$ satisfying the following conditions.
		\begin{itemize}
			\item[(a)] Every prime divisor of $m$ is also a divisor of $N$.
			\item[(b)] If $\delta|M,$ then $\delta|mN$ for every $\delta\geqslant 1$ such that $r_{\delta} \ne 0$.
			\item[(c)] $\kappa N \sum\limits_{\delta|M} r_{\delta}mN/\delta \equiv 0 \pmod {24}$.
			\item[(d)] $\kappa N \sum\limits_{\delta|M} r_{\delta} \equiv 0 \pmod {8}$.
			\item[(e)] $\frac{24m}{\gcd(-24\kappa t-\kappa\sum\limits_{\delta|M} \delta r_{\delta}, 24m)}$divides $N$.
			\item[(f)] If $2|m$, then either ($4|\kappa N$ and $8|sN$) or ($2|s$ and $8|(1-j)N$).
		\end{itemize}
	\end{definition}
	
	For positive integers $m$, $M$ and $N$, $\gamma= \begin{bmatrix}
		a &b\\ c&d	\end{bmatrix} \in \Gamma$, $r\in R(M)$ and $a\in R(N)$, we define
	\begin{align*}
		p_{m,r}(\gamma):= \min \limits_{\lambda\in\{0,1,\dots,m-1\}} \frac{1}{24} \sum \limits_{\delta|M} r_{\delta}\frac{\gcd^2(\delta a + \delta \kappa \lambda c,mc)}{\delta m}
	\end{align*}
	and
	\begin{align*}
		p^{*}_{a}(\gamma):=\frac{1}{24} \sum \limits_{\delta|N} a_{\delta}\frac{\gcd^2(\delta,c)}{\delta}.
	\end{align*}
	
	The following lemma is given by Radu \cite[Lemma $4.5$]{radu2009}.
	\begin{lemma}\label{finite check}
		Let $u$ be a positive integer, $(m, M, N, (r_{\delta}), t)\in \Delta^{*}$ and $a=(a_{\delta})\in R(N)$. Let $\{\gamma_1, \gamma_2, \dots, \gamma_n\} \subseteq \Gamma$ denote a complete set of representatives of the double cosets of $\Gamma_0(N)\backslash \Gamma / \Gamma_{\infty}$. Assume that $p_{m,r}(\gamma_i) +p^{*}_{a}(\gamma_i)\geq 0$ for all $1 \leq i \leq n$. Let $t_{\min}= \min_{t^{'} \in P_{m,r}(t)} t^{'}$ and
		\begin{align*}
			\nu:= \frac{1}{24} \left\{\left(\sum \limits_{\delta|M} r_{\delta}+\sum \limits_{\delta|N} a_{\delta}\right) [\Gamma: \Gamma_0(N)]-\sum \limits_{\delta|N} \delta a_{\delta} \right\} - \frac{1}{24m}\sum \limits_{\delta|M} \delta r_{\delta} -\frac{t_{\min}}{m}.
		\end{align*}
		If the congruence $c_r(mn+t^{'}) \equiv 0 \pmod u$ holds for all $t^{'} \in P_{m,r}(t)$ and $ 0\leq n \leq \lfloor \nu \rfloor$, then $c_r(mn+t^{'}) \equiv 0 \pmod u$ holds for all $t^{'} \in P_{m,r}(t)$ and $n \geq 0$.
	\end{lemma}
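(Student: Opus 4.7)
The strategy is to encode the residue-class-restricted coefficients $c_r(mn+t')$ as Fourier coefficients of a function built from the eta-quotient $f_r(z) := \prod_{\delta|M}\eta(\delta z)^{r_\delta}$, multiply by an auxiliary eta-quotient $H(z) := \prod_{\delta|N}\eta(\delta z)^{a_\delta}$ so that the product becomes a holomorphic modular form on $\Gamma_0(N)$, and then invoke Sturm's bound to reduce the congruence to a finite check.

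\textbf{Step 1: Sieve by residue classes mod $m$.} Starting from $f_r(z) = q^{s_r/24}\sum_n c_r(n)q^n$, where $s_r := \sum_{\delta|M}\delta r_\delta$, I would isolate the coefficients indexed by residue classes in the set $P_{m,r}(t)$. This is accomplished by averaging the translates $f_r\!\left(\tfrac{z+\lambda}{m}\right)$ over $\lambda \in \{0,1,\dots,m-1\}$, weighted by additive characters; tracking how $\mathrm{SL}_2(\mathbb{Z})$ permutes these translates via the coset action of units $s \in (\mathbb{Z}/24m)^*$ shows that the set $P_{m,r}(t)$ is exactly the orbit that appears, which explains its definition. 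The outcome is a function $F(z)$ whose $q$-expansion, up to an overall power of $q$ coming from $q^{s_r/24}$ and $q^{-t_{\min}/m}$, is $\sum_{t' \in P_{m,r}(t)}\sum_{n\geq 0}c_r(mn+t')q^{(mn+t'-t_{\min})/m}$.

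\textbf{Step 2: Modularity on $\Gamma_0(N)$.} I would next show that $F(z)H(z)$ transforms as a modular form of weight $k = \tfrac{1}{2}\bigl(\sum_{\delta|M} r_\delta + \sum_{\delta|N} a_\delta\bigr)$ on $\Gamma_0(N)$ with an appropriate character. Conditions (a)--(f) of $\Delta^\ast$ are exactly what is needed: conditions (c) and (d) are the 24- and 8-divisibility hypotheses that, via the $\eta$-multiplier system (see Theorem~\ref{thm2.3}), kill the obstructions to $\Gamma_0(N)$-invariance; conditions (a), (b), (e) ensure that the levels of $f_r$, the sieve, and $H$ are mutually compatible; and condition (f) handles the delicate $2$-adic quadratic character that arises when $2\mid m$. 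Next, the order of vanishing of $F(z)H(z)$ at the cusp determined by $\gamma_i$ may be computed directly from Theorem~\ref{thm2.4} and the transformation law of $\eta$: it equals $p_{m,r}(\gamma_i) + p^{*}_{a}(\gamma_i)$, where the $\min$ over $\lambda$ in the definition of $p_{m,r}$ reflects the averaging in Step~1. The cusp hypothesis thus forces holomorphicity everywhere, so $F(z)H(z) \in M_k(\Gamma_0(N),\chi)$.

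\textbf{Step 3: Sturm's bound and the value of $\nu$.} Once $F(z)H(z)$ is a holomorphic modular form of weight $k$ on $\Gamma_0(N)$, Sturm's theorem applied modulo $u$ asserts that vanishing of its $q$-expansion through order $\tfrac{k}{12}[\Gamma:\Gamma_0(N)]$ implies vanishing mod $u$ of every coefficient. Accounting for the $q$-expansion offsets introduced in Step~1 (namely $\tfrac{1}{24m}\sum\delta r_\delta$ from $f_r$, $\tfrac{1}{24}\sum\delta a_\delta$ from $H$, and $\tfrac{t_{\min}}{m}$ from the choice of base point in $P_{m,r}(t)$) and translating back to the indexing $mn+t'$ yields precisely the formula for $\nu$ in the statement. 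Thus the hypothesis that $c_r(mn+t')\equiv 0 \pmod u$ for $0\leq n\leq \lfloor\nu\rfloor$ and every $t'\in P_{m,r}(t)$ upgrades to all $n\geq 0$.

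\textbf{Main obstacle.} The bookkeeping in Steps~1 and~3 is essentially mechanical once the framework is set up, but Step~2 is the heart of the proof. Verifying that the sieved combination $F(z)H(z)$ really is $\Gamma_0(N)$-invariant (with the correct character) requires carefully propagating the $\eta$-multiplier through the translates $z\mapsto (z+\lambda)/m$, and it is exactly here that conditions (c), (d), and the bifurcated condition (f) in the definition of $\Delta^{*}$ enter; getting the $2$-adic quadratic character right under condition (f) — i.e., when $2\mid m$ — is the most subtle technical point and is where an ad hoc attempt would most likely go wrong.
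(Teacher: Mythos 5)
You should first note that the paper does not prove Lemma~\ref{finite check} at all: it is quoted verbatim from Radu \cite[Lemma 4.5]{radu2009}, so there is no in-paper argument to compare against, and the fair benchmark is Radu's original proof. Measured against that, your outline follows essentially the same route: sieve the coefficients into residue classes modulo $m$, observe that the $\mathrm{SL}_2(\mathbb{Z})$-action permutes the sieved pieces through the orbit that defines $P_{m,r}(t)$ (which is exactly why the congruence must be checked for all $t'\in P_{m,r}(t)$ simultaneously), use the $\Delta^{*}$ conditions to make the orbit-sum times the auxiliary eta-quotient $\prod_{\delta|N}\eta(\delta z)^{a_\delta}$ well behaved on $\Gamma_0(N)$, bound the cusp orders by $p_{m,r}(\gamma_i)+p^{*}_{a}(\gamma_i)$, and finish with a valence/Sturm-type bound whose bookkeeping of the offsets $\frac{1}{24m}\sum\delta r_\delta$, $\frac{1}{24}\sum\delta a_\delta$ and $\frac{t_{\min}}{m}$ reproduces $\nu$. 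Your weight $k=\frac12\bigl(\sum_{\delta|M}r_\delta+\sum_{\delta|N}a_\delta\bigr)$ and the identification of the main term of $\nu$ with $\frac{k}{12}[\Gamma:\Gamma_0(N)]$ are consistent with the stated formula.

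Two points are asserted rather than established and are where the remaining work lies. First, in Step 2 the claim that the sieved orbit-sum times $H$ lands literally in $M_k(\Gamma_0(N),\chi)$ is stronger than what Radu proves or needs; in his argument one only obtains a suitable invariance/transformation statement for the orbit-sum (the individual sieved pieces are permuted, and the eta-multipliers are controlled by conditions (c), (d), (f)), and the conclusion is drawn from the valence formula for that object. Your sketch acknowledges this is the delicate step but does not carry it out, so as written it is a plan rather than a proof. Second, in Step 3 you invoke ``Sturm's theorem modulo $u$'' for an arbitrary positive integer $u$; the standard statement is for prime modulus, and the extension to general $u$ requires the reduction to prime powers via the Chinese remainder theorem together with the induction $f\mapsto f/p$ (legitimate here because the relevant $q$-expansion has integer coefficients). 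Neither point is a wrong turn --- both are handled in \cite{radu2009} --- but they should be spelled out before the outline counts as a complete proof.
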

	
	The next lemma is given by Wang \cite[Lemma $4.3$]{wang2017}. This result gives the complete set of representatives of the double cosets in  $\Gamma_0(N)\backslash \Gamma / \Gamma_{\infty}$ when $N$ or $\frac{N}{2}$ is a square-free integer.
	
	\begin{lemma}\label{square-free}
		If $N$ or $\frac{N}{2}$ is a square-free integer, then
		\begin{align*}
			\bigcup_{\delta|N} \Gamma_0(N) \begin{bmatrix}
				1 &0\\ \delta &1	\end{bmatrix} \Gamma_{\infty} =\Gamma.
		\end{align*}
	\end{lemma}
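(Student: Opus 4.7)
The plan is to recast the lemma as a statement about cusps of $\Gamma_0(N)$. Since $\Gamma_\infty$ is the stabilizer of $\infty$ in $\Gamma$ and $\Gamma$ acts transitively on $\mathbb{P}^1(\mathbb{Q})$, the map $\gamma \mapsto \gamma \cdot \infty$ induces a bijection from $\Gamma_0(N)\backslash\Gamma/\Gamma_\infty$ onto the set of cusps of $\Gamma_0(N)$. The matrix $\sigma_\delta := \begin{bmatrix} 1 & 0 \\ \delta & 1 \end{bmatrix}$ sends $\infty$ to $1/\delta$, so the lemma is equivalent to the assertion that $\{1/\delta : \delta | N\}$ exhausts the cusp classes of $\Gamma_0(N)$ under the stated hypothesis.

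I would then prove this by a cardinality argument. The standard cusp-count formula gives
$$\lvert\text{cusps}(\Gamma_0(N))\rvert \;=\; \sum_{d| N}\phi(\gcd(d, N/d)).$$
If $N$ is squarefree, every divisor $d$ of $N$ satisfies $\gcd(d, N/d) = 1$, so the total is $\tau(N)$. If $N = 2N'$ with $N'$ odd squarefree, then writing an arbitrary divisor as $d'$ or $2d'$ with $d'| N'$ and using that $N'$ is odd and squarefree, a direct check gives $\gcd(d, N/d) = 1$ in each subcase, again yielding $\tau(N)$ cusps. On the other hand, by the standard classification of cusps of $\Gamma_0(N)$, the class of a cusp $a/c$ (in lowest terms) is determined by $\gcd(c, N)$ together with a residue class of $a$ modulo $\gcd(\gcd(c,N), N/\gcd(c,N))$; for the cusp $1/\delta$ the first invariant is $\delta$ and the second is trivial because $\gcd(\delta, N/\delta) = 1$ under our hypothesis. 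Hence the $\tau(N)$ cusps $1/\delta$, $\delta| N$, are pairwise inequivalent, and by cardinality they form the complete list. Translating back through the bijection recovers the double-coset decomposition of the lemma.

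The main subtlety will be the even case $N = 2N'$, where one must partition the divisors of $N$ into even and odd parts and verify the coprimality $\gcd(d, N/d) = 1$ separately in each subcase, relying crucially on $N'$ being both odd and squarefree so that the prime $2$ and each odd prime divisor of $N'$ appears to multiplicity exactly one in $N$. This is precisely where the hypothesis "$N/2$ squarefree" is consumed; if some odd prime divided $N$ to multiplicity $\geq 2$, the cusps with that prime contributing to the denominator would split into several $\Gamma_0(N)$-orbits, and $\{1/\delta : \delta | N\}$ would cease to be a complete set of representatives. Otherwise the argument rests only on standard facts about cusps of congruence subgroups and requires no new input.
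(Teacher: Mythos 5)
The paper does not actually prove this lemma --- it is quoted verbatim from Wang \cite[Lemma 4.3]{wang2017} --- so there is no internal proof to compare against; your route through the bijection $\Gamma_0(N)\backslash\Gamma/\Gamma_\infty \leftrightarrow \{\text{cusps of }\Gamma_0(N)\}$, the cusp-count $\sum_{d\mid N}\phi(\gcd(d,N/d))$, and the standard classification of cusps by the invariant $\gcd(c,N)$ is the natural and essentially standard way to establish it, and the skeleton of your argument is sound.

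However, your handling of the even case contains a genuine error: you interpret the hypothesis ``$N/2$ squarefree'' as $N=2N'$ with $N'$ \emph{odd} squarefree, but that $N$ is already squarefree, so your second case adds nothing. The case the hypothesis actually buys is $N=4m$ with $m$ odd squarefree (i.e.\ $N/2=2m$ squarefree but $N$ not), and this case does occur in the paper's application, since $N=2^{\epsilon_2}p^{\epsilon_p}p_1\cdots p_g$ can be divisible by $4$ when $2\mid m$ and $\epsilon_2=1$. In that case your key assertion ``a direct check gives $\gcd(d,N/d)=1$ in each subcase'' is false: for a divisor $d=2e$ with $e\mid m$ one has $\gcd(d,N/d)=\gcd(2e,2m/e)=2$, and likewise the claim that the second cusp invariant is trivial ``because $\gcd(\delta,N/\delta)=1$'' fails for $\delta=2e$. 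The conclusion survives, but for a different reason: $\phi(\gcd(d,N/d))=\phi(2)=1$, so the cusp count is still $\tau(N)$, and the cusps $1/\delta$, $\delta\mid N$, remain pairwise inequivalent because their first invariants $\gcd(\delta,N)=\delta$ are distinct, while each class with invariant $\delta$ contains only $\phi(\gcd(\delta,N/\delta))=1$ orbit. As written, though, the step that is supposed to consume the hypothesis ``$N/2$ squarefree'' analyses the wrong case and would fail on the very values of $N$ for which that half of the hypothesis is needed; you should redo the even case with $N=4m$ and replace the coprimality claim by the weaker (and correct) statement $\gcd(d,N/d)\in\{1,2\}$, using $\phi(1)=\phi(2)=1$.
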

	
	\subsection{Proof of Theorem \ref{mainthmRaduSellers}}
	For an integer $m$, a prime $p\geqslant 3$ and $t\in \{0,1,\dots,m-1\}$, we recall that
	\begin{align*}
		\kappa:=&\kappa(m)=\gcd(m^2-1,24),\\
		\hat {p}:=&\frac{p^2-1}{24},\\
		A_t:=&A_t(m,p)=\frac{24m}{\gcd(-\kappa(24 t+p^2-1),24m)} =\begin{cases}
			\frac{m}{\gcd(-\kappa (t+\hat{p}),m)} & \textrm{ if } p>3,\\
			\frac{3m}{\gcd(-\kappa (3t+1),3m)}& \textrm{ if } p=3.,
		\end{cases}\\
		\epsilon_2:=&\epsilon_2(m,p)=\begin{cases}
			\frac{1-(-1)^{\frac{p-1}{2}}}{2} & \textrm{ if } 2| m,\\
			0 & \textrm{ if } 2\nmid m,
		\end{cases}\\
		\epsilon_p:=&\epsilon_p(m)=\begin{cases}
			1 & \textrm{ if } p\nmid m,\\
			0 & \textrm{ if } p|m.
		\end{cases}
	\end{align*}

	We now prove the following three results specific to the proof of Theorem \ref{mainthmRaduSellers}.
	
	\begin{lemma} \label{lemdelta}
		Let $p\geq 3$ be a prime number. For a positive integer $g$, let $e_1, e_2, \ldots, e_g$ be non-negative integers and let $p_1,p_2, \ldots, p_{g}$ be prime numbers. Let  $$(m,M,N,r,t)=(p_1^{e_1} p_2^{e_2} \cdots p_g^{e_g},p,2^{\epsilon_2} p^{\epsilon_p} p_1 p_2 \cdots p_{g},r=(r_1=-1,r_p=p),t)$$ where $t \in \{0,1,\ldots, m-1\}$ is such that $A_t|N$. Then $(m,M,N,r,t) \in \Delta^{*}$.
	\end{lemma}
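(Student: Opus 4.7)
The proof is a direct verification of the six defining conditions (a)--(f) of $\Delta^{*}$ for the given tuple. The plan is to unpack each condition, exploit the simple structure $M = p$ with $r_1 = -1$ and $r_p = p$, and invoke the hypothesis $A_t \mid N$ at the appropriate moment for condition (e).

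First I dispatch the easy items. Condition (a) is immediate since $N$ explicitly contains the product $p_1 p_2 \cdots p_g$ of the distinct prime divisors of $m$. Condition (b) requires $\delta \mid mN$ for $\delta \in \{1, p\}$ (the divisors of $M$ with $r_{\delta} \ne 0$); this is trivial for $\delta = 1$, and for $\delta = p$ it follows from $p \mid p^{\epsilon_p} \mid N$ when $p \nmid m$, and from $p \mid m$ otherwise. Condition (c) is automatic because
\begin{align*}
\sum_{\delta \mid M} r_\delta \cdot \frac{mN}{\delta} \;=\; -mN + p \cdot \frac{mN}{p} \;=\; 0.
\end{align*}
For condition (e), a short calculation gives $\sum_{\delta \mid M} \delta r_\delta = -1 + p^2 = p^2 - 1$, so that $\frac{24m}{\gcd(-24\kappa t - \kappa(p^2-1),\, 24m)}$ simplifies exactly to the $A_t$ defined in the paper (handling separately the cases $p = 3$ and $p \geq 5$, where the extraneous factor of $8$ or $24$ cancels against $24m$); the assumed divisibility $A_t \mid N$ is then precisely the content of (e).

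The bulk of the work lies in (d) and (f), which amount to tracking the $2$-adic valuation $v_2$ of the relevant quantities and split according to the parity of $m$. For (d) one needs $8 \mid \kappa N (p-1)$. If $m$ is odd, then $m^2 - 1 = (m-1)(m+1)$ is a product of two consecutive even integers, so $8 \mid (m^2 - 1)$ and hence $8 \mid \kappa$. If $m$ is even, then $m^2-1$ is odd so $\kappa$ is odd, but the prime $2$ now appears in the list $p_1, \ldots, p_g$, forcing $v_2(N) = \epsilon_2 + 1$; a two-case split on $p \bmod 4$ then shows $v_2(N(p-1)) \geq 3$, namely either $\epsilon_2 = 0$ combined with $4 \mid p-1$, or $\epsilon_2 = 1$ combined with $2 \mid p-1$.

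For condition (f), observe that $\prod_{\delta \mid M} \delta^{|r_\delta|} = 1 \cdot p^p = p^p$, which is odd since $p$ is odd. Hence $s = 0$ and $j = p^p$, so the condition $2 \mid s$ is automatic and the second disjunct of (f) reduces to $8 \mid (1 - p^p) N$ whenever $2 \mid m$. Since $p^2 \equiv 1 \pmod{8}$ for every odd $p$, one gets $p^p \equiv p \pmod{8}$, hence $1 - p^p \equiv 1 - p \pmod{8}$. Combining this with $v_2(N) = \epsilon_2 + 1$ and the dependence of $\epsilon_2$ on $p \bmod 4$, each of the four residue classes of $p$ modulo $8$ yields $v_2((1-p^p)N) \geq 3$: the classes $p \equiv 3, 7 \pmod 8$ use $\epsilon_2 = 1$ (so $v_2(N) \geq 2$) together with $v_2(1-p) \geq 1$, while $p \equiv 5 \pmod 8$ uses $\epsilon_2 = 0$ (so $v_2(N) \geq 1$) together with $v_2(1-p) = 2$, and $p \equiv 1 \pmod 8$ already gives $v_2(1-p) \geq 3$. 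The mild obstacle is to ensure the case analyses in (d) and (f) are exhaustive and compatible; once the $2$-adic valuations of $\kappa$, $N$, $p-1$, and $1 - p^p$ are tabulated against the parity of $m$ and the residue of $p$ modulo $8$, the verification is complete.
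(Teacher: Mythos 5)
Your proof is correct and takes essentially the same route as the paper: a direct check of conditions (a)--(f), using the parity of $m$ to control $\kappa$, the fact that $v_2(N)=\epsilon_2+1$ when $2\mid m$, and the identification $\sum_{\delta\mid M}\delta r_\delta=p^2-1$ so that (e) is exactly the hypothesis $A_t\mid N$. The only cosmetic difference is in (f), where the paper notes $(1-p)\mid(1-p^p)$ and reuses the bound $8\mid N(p-1)$ already established for (d), while you rederive the same fact via $p^p\equiv p\pmod 8$ and a case split on $p\bmod 8$.
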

	\begin{proof}
		We first note that
		\begin{align*}
			\gcd(m^2-1,8)=\begin{cases}
				8 & \textrm{ if } $m$ \textrm{ is odd,}\\ 
				1 & \textrm{ if } $m$ \textrm{ is even}\\ 
			\end{cases}
			\quad\textrm{and}\quad
			\gcd(m^2-1,3)=\begin{cases}
				3 & \textrm{ if } 3 \nmid m,\\ 
				1 & \textrm{ if } 3|m.\\ 
			\end{cases}
		\end{align*}
		Therefore
		\begin{align*}
			\kappa=\gcd(m^2-1,24)=\begin{cases}
				24& \textrm{ if } \gcd(m,6)=1,\\ 
				8 & \textrm{ if } \gcd(m,6)=3,\\ 
				3 & \textrm{ if } \gcd(m,6)=2,\\ 
				1 & \textrm{ if } \gcd(m,6)=6.\\ 
			\end{cases}
		\end{align*}
		It is immediate that the conditions (a) and (b) in the definition of $\Delta^{*}$ are satisfied. Since $M=p$ and $r=(r_1=-1,r_p=p)$, we see that $\sum\limits_{\delta|M} r_{\delta}mN/\delta=r_1mN+r_{p}mN/p=0$ and therefore (c) is also satisfied. Next, we note that $\kappa N\sum\limits_{\delta|M} r_{\delta} =\kappa N (p-1)$ and $N(p-1)\equiv0\pmod 4$. We have $\kappa N(p-1)\equiv 0 \pmod 8$ if $\gcd(m,6)=1$ or $3$. Further, if $\gcd(m,6)=2$ or $6$, then $2|m$ gives $2^{1+\epsilon_2}|N$ and so it follows that $N(p-1)\equiv 0 \pmod 8$. Thus (d) holds. From the conditions on $t$, we see that (e) is also satisfied. For (f), we see that $\prod\limits_{\delta|M} \delta^{|r_{\delta}|}=p^p$ gives $s=0$ and $j=p^p$. As observed earlier in the justification for (c), we have $N(p-1)\equiv 0 \pmod 8$ if $2|m$. Thus if $2|m$, then $(1-p)|(1-j)$ implies $8|(1-j)N$. This completes the proof. 
	\end{proof}
	
	\begin{lemma} \label{pmrpositive}
		Let $p\geq 3$ be a prime number. For a positive integer $m$, $M=p$, $r=(r_1=-1,r_p=p)$ and $\gamma= \begin{bmatrix}
			a &b\\ c&d	\end{bmatrix} \in \Gamma$, we have
		\begin{align*}
			p_{m,r}(\gamma)= \min \limits_{\lambda\in\{0,1,\dots,m-1\}} \frac{1}{24} \sum \limits_{\delta|M} r_{\delta}\frac{\gcd^2(\delta a + \delta \kappa \lambda c,mc)}{\delta m} \geqslant 0.
		\end{align*}
	\end{lemma}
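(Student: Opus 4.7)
With $M=p$, the only divisors appearing in the sum are $\delta=1$ and $\delta=p$, and by hypothesis $r_1=-1$, $r_p=p$. So for any $\gamma=\begin{bmatrix}a&b\\c&d\end{bmatrix}\in\Gamma$ and any $\lambda\in\{0,1,\dots,m-1\}$, the inner expression simplifies to
\begin{align*}
\frac{1}{24}\sum_{\delta|M} r_\delta \frac{\gcd^2(\delta a+\delta\kappa\lambda c,mc)}{\delta m}
 = \frac{1}{24m}\Bigl[\gcd^2\bigl(p(a+\kappa\lambda c),mc\bigr)-\gcd^2(a+\kappa\lambda c,mc)\Bigr].
\end{align*}
The plan is therefore to show that this quantity is nonnegative termwise in $\lambda$, which then forces the minimum over $\lambda$ to be nonnegative as well.

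The key observation is the completely elementary divisibility fact: for any integers $x$ and $n$, every common divisor of $x$ and $n$ is also a common divisor of $px$ and $n$, hence $\gcd(x,n)\mid \gcd(px,n)$ and so $\gcd(px,n)\geq \gcd(x,n)$. I would apply this with $x=a+\kappa\lambda c$ and $n=mc$. If $c=0$ then $\gamma\in\Gamma$ forces $a=\pm1$ and the convention $\gcd(u,0)=|u|$ still gives $p|a|\geq |a|$, so the inequality survives the degenerate case as well.

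Combining these pieces, the bracketed difference in the displayed formula is nonnegative for \emph{every} $\lambda$, so dividing by $24m>0$ preserves the sign, and finally the minimum over $\lambda\in\{0,\dots,m-1\}$ of a family of nonnegative quantities is nonnegative. This gives $p_{m,r}(\gamma)\geq 0$, completing the proof.

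There is no real obstacle here: the lemma is essentially a bookkeeping consequence of the trivial monotonicity $\gcd(x,n)\leq \gcd(px,n)$, together with the fact that in the sum the positive term carries a factor of $p$ (inside the gcd) while the negative term does not. The mild point worth flagging is that the sign pattern $r_1=-1$, $r_p=p$ is exactly what makes the cancellation land on the right side of zero; this is why the hypotheses on $r$ must be stated precisely as they are in the lemma.
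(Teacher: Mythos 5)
Your proof is correct, and it takes a genuinely shorter route than the paper's. Both arguments start from the same reduction of the sum over $\delta\mid M=p$ to the single bracketed difference $\gcd^2\bigl(p(a+\kappa\lambda c),mc\bigr)-\gcd^2(a+\kappa\lambda c,mc)$, but from there the paths diverge. The paper first uses $ad-bc=1$ to get $\gcd(a+\kappa\lambda c,c)=1$, rewrites the negative term as $\gcd^2(a+\kappa\lambda c,m)$, and then runs a case analysis on $p\mid c$ versus $p\nmid c$, with a further split on $p\mid m$ involving the $p$-adic order of $\gcd(a+\kappa\lambda c,m)$, to conclude $G\geq 0$ in each case. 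You instead observe the elementary monotonicity $\gcd(x,n)\mid\gcd(px,n)$, hence $\gcd(px,n)\geq\gcd(x,n)\geq 0$, applied with $x=a+\kappa\lambda c$ and $n=mc$, which makes every term of the minimum nonnegative at once; the $c=0$ degenerate case is absorbed by the same divisibility fact (or by your explicit remark). What your approach buys is brevity and generality: it needs neither the coprimality of $a$ and $c$ nor any case distinctions, so it would prove the bound for an arbitrary integer matrix, not just $\gamma\in\Gamma$. What the paper's finer analysis buys, incidentally, is the strict inequality $G>0$ in the case $p\mid c$, but that extra strength is never used, since the lemma only asserts $p_{m,r}(\gamma)\geq 0$.
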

	\begin{proof} We note that
		\begin{align*}
			\sum \limits_{\delta|M} r_{\delta}\frac{\gcd^2(\delta a + \delta \kappa \lambda c,mc)}{\delta m}=p\frac{\gcd^2(p a + p \kappa \lambda c,mc)}{p m}
			-\frac{\gcd^2( a + \kappa \lambda c,mc)}{ m}
		\end{align*}
		Since $\gamma \in \Gamma=SL_2(\Z)$, we have $ad-bc=1$ which implies that $\gcd(a,c)=1$. Thus it follows that $\gcd( a +  \kappa \lambda c,c)=1$. Therefore it is enough to prove that
		\begin{align*}
			G:={\gcd}^2(p a + p \kappa \lambda c,mc)
			-{\gcd}^2( a + \kappa \lambda c,m)\geq 0
		\end{align*}
		for each $\lambda\in\{0,1,\dots,m-1\}$. 
		
		Let $\lambda\in\{0,1,\dots,m-1\}$ be fixed. We consider the two cases $p|c$ and $p\nmid c$ separately.
		
		\noindent\textbf{Case 1: $p|c$.} We set $c_p=\frac{c}{p}$. We observe that $\gcd(a,c)=1$ implies that $\gcd(a,c_p)=1$ which in turn implies $\gcd( a +  \kappa \lambda c,c_p)=\gcd( a +  \kappa \lambda p c_p,c_p)=1$. Hence we have 
		\begin{align*}
			G&=p^2{\gcd}^2( a +  \kappa \lambda c,mc_p)
			-{\gcd}^2( a + \kappa \lambda c,m)\\
			&=p^2{\gcd}^2( a +  \kappa \lambda c,m)
			-{\gcd}^2( a + \kappa \lambda c,m)>0.
		\end{align*}
		\textbf{Case 2: $p\nmid c$.} In this case, we have
		\begin{align*}
			G&={\gcd}^2( p(a +  \kappa \lambda c),m)
			-{\gcd}^2( a + \kappa \lambda c,m).
		\end{align*}
		If $p\nmid m$, then $\gcd( p(a +  \kappa \lambda c),m)=\gcd( a +  \kappa \lambda c,m)$ and thus $G=0$. We now assume that $p|m$. We set $m_p=\frac{m}{p}\in \Z$. Then
		\begin{align*}
			G&=p^2{\gcd}^2( a +  \kappa \lambda c,m_p)
			-{\gcd}^2( a + \kappa \lambda c,m).
		\end{align*}
		Let $d=\gcd( a + \kappa \lambda c,m)$. Let ord$_p(n)$ denote the highest exponent of $p$ dividing a positive integer $n$. It is clear that ord$_p(d)\leq$ord$_p(m)=$ord$_p(m_p)+1$. If ord$_p(d)\leq$ord$_p(m_p)$, then $G=d^2(p^2-1)>0$. If ord$_p(d)=$ord$_p(m_p)+1$, then $\gcd( a + \kappa \lambda c,m_p)=\frac{d}{p}$ and therefore $G=p^2(\frac{d}{p})^2-d^2=0$. This completes the proof.
	\end{proof}
	
	\begin{lemma}\label{lembound}
		Let $p\geq 3$ be a prime and let $u$ be an integer. Let $(m,M,N,r,t)$ be as defined in Lemma \ref{lemdelta}. Let $t_{\min}= \min_{t^{'} \in P_{m,r}(t)} t^{'}$.	If the congruence $a_p(mn+t^{'}) \equiv 0 \pmod u$ holds for all $t^{'} \in P_{m,r}(t)$ and $ 0\leq n \leq \left\lfloor \frac{2^{\epsilon_2}(p+1)^{\epsilon_p}(p-1)(p_1+1)(p_2+1)\cdots (p_{g}+1)}{24} \right\rfloor$, then $a_p(mn+t^{'}) \equiv 0 \pmod u$ holds for all $t^{'} \in P_{m,r}(t)$ and $n \geq 0$.
	\end{lemma}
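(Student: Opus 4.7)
The plan is to apply Radu's finite check (Lemma \ref{finite check}) to the tuple specified in Lemma \ref{lemdelta}, paired with the trivial auxiliary sequence $a=0\in R(N)$. With $M=p$ and $r=(r_1,r_p)=(-1,p)$, the product defining $c_r(n)$ matches the generating series of $a_p(n)$ in \eqref{eq701}, so $c_r(n)=a_p(n)$ for all $n\geq 0$. Consequently, Lemma \ref{finite check} will convert the infinite family $a_p(mn+t')\equiv 0\pmod u$, $t'\in P_{m,r}(t)$, $n\geq 0$, into a finite check for $0\leq n\leq \lfloor\nu\rfloor$.

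I would verify the three hypotheses of Lemma \ref{finite check} in turn. First, Lemma \ref{lemdelta} gives $(m,M,N,r,t)\in\Delta^{*}$; the condition $A_t\mid N$ assumed in the present lemma is exactly (e) in the definition of $\Delta^{*}$. Second, to produce double coset representatives, I would note that $N=2^{\epsilon_2}p^{\epsilon_p}p_1\cdots p_g$ is square-free except when $2\mid m$ and $\epsilon_2=1$, in which case the prefactor $2^{\epsilon_2}$ overlaps with the $2$ already present among $\{p_i\}$ and the $2$-adic valuation of $N$ equals $2$; in both cases $N$ or $N/2$ is square-free, so Lemma \ref{square-free} furnishes the representatives $\gamma_\delta$ for $\delta\mid N$. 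Third, since $a=0$, the auxiliary contribution $p^{*}_{a}(\gamma)$ vanishes identically, while Lemma \ref{pmrpositive} supplies $p_{m,r}(\gamma)\geq 0$ for every $\gamma\in\Gamma$; hence $p_{m,r}(\gamma_i)+p^{*}_{a}(\gamma_i)\geq 0$ at every representative.

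The remaining step is to evaluate $\nu$. With $\sum_{\delta\mid M} r_\delta=p-1$, $\sum_{\delta\mid M}\delta r_\delta=p^2-1$, and $a=0$, one finds
\[
\nu=\frac{(p-1)\,[\Gamma:\Gamma_0(N)]}{24}-\frac{p^2-1}{24m}-\frac{t_{\min}}{m}.
\]
The key computation is
\[
[\Gamma:\Gamma_0(N)]=2^{\epsilon_2}(p+1)^{\epsilon_p}(p_1+1)(p_2+1)\cdots(p_g+1),
\]
which I expect to be the only delicate step. Using $[\Gamma:\Gamma_0(N)]=N\prod_{q\mid N}(1+1/q)$, one needs a short case analysis on whether $2\in\{p_i\}$ and whether $p\in\{p_i\}$, and one must observe that when $2\mid m$ the prefactor $2^{\epsilon_2}$ only doubles (rather than contributing a factor $3$) the index, precisely because the prime $2$ is already among the $p_i$. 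Once the index formula is in hand, the inequalities $t_{\min}\geq 0$ and $(p^2-1)/(24m)>0$ immediately give
\[
\lfloor\nu\rfloor\leq\left\lfloor\frac{2^{\epsilon_2}(p+1)^{\epsilon_p}(p-1)(p_1+1)\cdots(p_g+1)}{24}\right\rfloor,
\]
and the conclusion follows from Lemma \ref{finite check}.
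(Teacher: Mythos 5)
Your proposal is correct and follows essentially the same route as the paper: invoke Lemma \ref{lemdelta} to get $(m,M,N,r,t)\in\Delta^{*}$, use Lemma \ref{square-free} (since $N$ or $N/2$ is square-free) for the double coset representatives, take $a=0$ so that Lemma \ref{pmrpositive} gives $p_{m,r}(\gamma_\delta)+p^{*}_{a}(\gamma_\delta)\geq 0$, and then bound $\lfloor\nu\rfloor$ using $[\Gamma:\Gamma_0(N)]=2^{\epsilon_2}(p+1)^{\epsilon_p}(p_1+1)\cdots(p_g+1)$ together with $t_{\min}\geq 0$, concluding via Lemma \ref{finite check}. Your index computation, including the case analysis when $2\mid m$, matches the paper's implicit use of the formula $[\Gamma:\Gamma_0(N)]=N\prod_{q\mid N}(1+\frac{1}{q})$.
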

	\begin{proof}
		It is enough to show that the assumptions of Lemma \ref{finite check} are satisfied and that the upper bound in Lemma \ref{finite check} is less than or equal to $\left\lfloor \frac{2^{\epsilon_2}(p+1)^{\epsilon_p}(p-1)(p_1+1)(p_2+1)\cdots (p_{g}+1)}{24} \right\rfloor$. For $\delta|N$, we set $\gamma_{\delta}=\begin{bmatrix} 1 &0\\ \delta&1 \end{bmatrix}$. Since $\epsilon_2= 0$ or $1$, $N$ or $\frac{N}{2 }$ is a square-free integer. Thus Lemma \ref{square-free} implies that $\{\gamma_{\delta}: \delta|N\}$ forms a complete set of double coset representatives of $\Gamma_0(N)\backslash \Gamma / \Gamma_{\infty}$. Lemma \ref{pmrpositive} implies that $p_{m,r}(\gamma_{\delta})\geq 0$ for each $\delta|N$.	Therefore we take $a_{\delta}=0$ for each $\delta|N$, that is, $a=(0,0, \ldots, 0)\in R(N)$ and hence $p_{m,r}(\gamma_{\delta}) +p_{a}^{*}(\gamma_{\delta})\geq0$ for each $\delta|N$. Since $t_{\min}\geq 0$, we have
		\begin{align*}
			\lfloor \nu \rfloor&= \left \lfloor \frac{(p-1)}{24}[\Gamma:\Gamma_0(N)] - \frac{p^2-1}{24m} -\frac{t_{\min}}{m} \right \rfloor\\
			& \leqslant \left \lfloor \frac{(p-1)}{24}[\Gamma:\Gamma_0(N)] - \frac{p^2-1}{24m} \right \rfloor\\
			&= \left \lfloor \frac{2^{\epsilon_2}(p+1)^{\epsilon_p}(p-1)(p_1+1)(p_2+1)\cdots (p_{g}+1)}{24} - \frac{p^2-1}{24m} \right \rfloor.
		\end{align*}
	\end{proof}
	
	\begin{proof}[Proof of Theorem \ref{mainthmRaduSellers}]
		We get from \eqref{1e1} that
		\begin{align*}
			\sum\limits_{n=0}^{\infty} a_p(n) q^n= \frac{(q^p;q^p)^{p}_{\infty}}{(q;q)_{\infty}}.
		\end{align*}
		Let $(m,M,N,r,t)=(p_1^{e_1} p_2^{e_2} \cdots p_g^{e_g},p,2^{\epsilon_2} p^{\epsilon_p} p_1 p_2 \cdots p_{g},r=(r_1=-1,r_p=p),t)$ be such that $t \in \{0,1,\ldots, m-1\}$ and $A_t|N$. Then Lemma \ref{lemdelta} implies $(m,M,N,r,t) \in \Delta^{*}$. Since $\sum_{\delta|M}\delta r_{\delta}=p^2-1$, we see that $P_{m,r}(t)=P(t)$. Now the assertion follows from Lemma \ref{lembound}.
	\end{proof}

	\subsection{Proof of Theorem \ref{congruences}}
	We begin with the proof of the first congruence. We take $p=5$, $m=3\cdot 5=15$ and $t=6$. Using Sage \cite{Sage}, we find that $P(6)=\{6,10,12,13\}$. We compute that the upper bound in Corollary \ref{coropq} is less than $4\hat{p}=4$. Using Mathematica, we verify that $a_5(15n +t) \equiv 0 \pmod {3}$ for $t\in\{6,10,12,13\}$ and $0\leq n < 4$. Now the first congruence follows immediately from Corollary \ref{coropq}.
	
	A similar approach can be used to prove the other congruences. In particular, we set $B_p=4\hat{p}$ and we put the values of $p$, $m$, $t$, $P(t)$ and $B_p$ in the following table.
	\begin{center}
		\begin{tabular}{|l|l|l|l|l|}
			\hline
			$p$ & $m$ & $t$ & $P(t)$ & $B_p$\\
			\hline
			7 & 21 & 3,8 & $P(3)=\{3,15,18\}$, $P(8)=\{8,11,17\}$ & 8 \\
			\hline
			\multirow{2}{*}{11} & \multirow{2}{*}{33} & \multirow{2}{*}{3,11} & $P(3)=\{3,12,24,27,30\}$, & \multirow{2}{*}{20}\\
			& & & $P(11)=\{11,20,26,29,32\}$& \\
			\hline
			\multirow{2}{*}{13} & \multirow{2}{*}{39} & \multirow{2}{*}{3,7} & $P(3)=\{3,9,15,18,33,36\}$,& \multirow{2}{*}{28} \\
			& & &  $P(7)=\{7,10,16,22,28,31\}$ & \\
			\hline
			\multirow{2}{*}{17} & \multirow{2}{*}{51} & \multirow{2}{*}{10,14} &$P(10)=\{10,16,19,25,28,34,46,49\}$& \multirow{2}{*}{48}\\
			&&& $P(14)=\{14,20,23,26,35,38,41,47\}$& \\
			\hline
			\multirow{2}{*}{19} & \multirow{2}{*}{57} & \multirow{2}{*}{7,14} & $P(7)=\{7,16,19,22,25,31,37,52,55\}$, & \multirow{2}{*}{60} \\
			&&& $P(14)=\{14,17,26,35,38,41,44,50,56\}$ &\\
			\hline
			\multirow{2}{*}{23} & \multirow{2}{*}{69} & \multirow{2}{*}{3,16} &  $P(3)=\{3,9,27,30,33,36,42,48,51,60,63\}$, & \multirow{2}{*}{88} \\
			&&& $P(16)=\{16,22,31,34,43,46,52,58,61,64,67\}$ &\\
			\hline
		\end{tabular}
	\end{center}
	For each $p$ and $m$ given in the above table, we verify, using Mathematica, that $a_p(mn +t^{'}) \equiv 0 \pmod {3}$ for $t^{'}\in P(t)$ and $0\leq n < B_p$ for respective values of $t$ and $B_p$. Now the congruences follows immediately from Corollary \ref{coropq}. This completes the proof.

	\subsection{Proof of Theorem \ref{congruences5}}
	We start by proving the first congruence. We take $p=7$, $m=5\cdot 7=35$ and $t=4$. Using Sage \cite{Sage}, we find that $P(4)=\{4,17,22,24,29,32\}$. We compute that the upper bound in Corollary \ref{coropq} is less than $6\hat{p}=12$. Using Mathematica, we verify that $a_7(35n +t) \equiv 0 \pmod {5}$ for $t\in\{4,17,22,24,29,32\}$ and $0\leq n < 12$. Now the first congruence follows immediately from Corollary \ref{coropq}.
	
	A similar approach can be used to prove the other congruences. In particular, we set $B_p=6\hat{p}$ and we put the values of $p$, $m$, $t$, $P(t)$ and $B_p$ in the following table.
	\begin{center}
		\begin{tabular}{|l|l|l|l|l|}
			\hline
			$p$ & $m$ & $t$ & $P(t)$ & $B_p$\\
			\hline
			\multirow{2}{*}{11} & \multirow{2}{*}{55} & \multirow{2}{*}{4,7} & $P(4)=\{4, 9, 11, 21, 26, 29, 31, 44, 51, 54\}$, & \multirow{2}{*}{30}\\
			& & & $P(7)=\{7, 18, 22, 32, 33, 37, 42, 43, 48, 53\}$& \\
			\hline
			17 & 85 & 4 &$P(4)=\{4, 7, 9, 14, 24, 37, 47, 52, 54, 57, 64, 69, 72, 74, 77, 82\}$& 72\\
			\hline
		\end{tabular}
	\end{center}
	For each $p$ and $m$ given in the above table, we verify, using Mathematica, that $a_p(mn +t^{'}) \equiv 0 \pmod {5}$ for $t^{'}\in P(t)$ and $0\leq n < B_p$ for respective values of $t$ and $B_p$. Now the congruences follows immediately from Corollary \ref{coropq}. This completes the proof.

	\subsection{Proof of Theorem \ref{congruencesrandom}}
	We first prove \eqref{18e2}. We take $p=5$, $m=7^2$ and $t\in\{6,20\}$. Using Sage \cite{Sage}, we find that $P(6)=\{6,13,27\}$ and $P(20)=\{20,34,41\}$. We compute that the upper bound in Theorem \ref{mainthmRaduSellers} is less than $\lfloor \frac{6\cdot 4\cdot 8}{24} \rfloor=8$. Using Mathematica, we verify that $a_5(49n +t) \equiv 0 \pmod {3}$ for $t\in\{6,13,20,27,34,41\}$ and $0\leq n < 8$. Now \eqref{18e2} follows immediately from Theorem \ref{mainthmRaduSellers}.
	
	A similar method can be used to prove \eqref{18e1} and \eqref{18e3}-\eqref{18e7}. In particular, we set $B_{p,m}=\left\lfloor \frac{2^{\epsilon_2}(p+1)^{\epsilon_p}(p-1)(p_1+1)(p_2+1)\cdots (p_{g}+1)}{24} -\frac{(p^2-1)}{24m} \right\rfloor$ and we put the values of $p$, $m$, $t$, $P(t)$ and $B_{p,m}$ in the following table.
	\begin{center}
		\begin{tabular}{|l|l|l|l|l|}
			\hline
			$p$ & $m$ & $t$ & $P(t)$ & $B_{p,m}$\\
			\hline
			5 & 8 & 3 & $P(3)=\{3\}$& 2 \\
			7 & 25 & 3,8 & $P(3)=\{3,18\}$, $P(8)=\{8,13\}$ & 11 \\
			11 & 44 & 9 & $P(9)=\{9,21,29,33,37\}$ & 29 \\
			13 & 12 & 3 & $P(3)=\{3\}$ & 5 \\
			13 & 64 & 25 & $P(25)=\{25\}$ & 20 \\
			19 & 76 & 3 & $P(3)=\{3, 7, 19, 31, 35, 55, 63, 71, 75\}$ & 89 \\
			\hline
		\end{tabular}
	\end{center}
	For each $p$ and $m$ given in the above table, we verify, using Mathematica, that $a_p(mn +t^{'}) \equiv 0 \pmod {3}$ for $t^{'}\in P(t)$ and $0\leq n \leq B_{p,m}$ for respective values of $t$ and $B_{p,m}$. Now the congruences follows immediately from Corollary \ref{coropq}. This completes the proof.
	

	\section{Conclusion}
	By Radu and Seller's method, for a prime $p\geq3$, we obtain an algorithm in Theorem \ref{mainthmRaduSellers} for congruences of the type $a_p(mn+t) \equiv 0 \pmod u$ when $m=p_1^{e_1} p_2^{e_2}\cdots p_g^{e_g}$ where $p_i$ are prime numbers and $e_1, e_2, \cdots, e_g$ are non-negative integers. Since we have obtained the density results for divisibility of $a_{3^{\alpha}m}(n)$ by powers of $2$ and $3$, and divisibility of $a_{t}(n)$, where $t=p_1^{a_1}p_2^{a_2}\ldots p_m^{a_m}$ with $p_i \geq 5$, by prime powers of $p_i^j$ in Theorems \ref{mainthm1}, \ref{mainthm2} and \ref{mainthm11}, it will be interesting to see more algebraic results in the same direction.



\begin{thebibliography}{99}
		\bibitem{Allouche2018} J. P. Allouche and L. Goldmakher, \emph{Mock character and Kronecker symbol,} J. Number Theory \textbf{192} (2018), 356-372.
		
		
		
		
		
		
		
		
		\bibitem{Chen2013} S. Chen, \emph{Congruences for $t$-core partition functions}, J. Number Theory \textbf{133} (2013), 4036-4046.
		
		\bibitem{Garvan1993} F. Garvan, \emph{Some congruences for partitions that are $p$-cores}, Proc. Lond. Math. Soc. \textbf{66} (1993), no. 3, 449-478.
		
		\bibitem{Garvan1990} F. Garvan, D. Kim and D. Stanton, \emph{Cranks and $t$-cores}, Invent. Math. \textbf{101} (1990), no. 1, 1-17.
		
		

		
		\bibitem{Graville1996} A. Granville and K. Ono, \emph{Defect zero $p$-blocks for finite simple groups}, Trans. Amer. Math. Soc. \textbf{348} (1996),	no. 1, 331-347.
		
		
		\bibitem{Hirs2019} M. D. Hirschhorn and J. A. Sellers, \emph{Parity results for partitions wherein each parts an odd number of times}, Bull. Aust. Math. Soc. \textbf{1} (2019), 51-55.
		
		\bibitem{Koblitz} N. Koblitz, \emph{Introduction to elliptic curves and modular forms}, Springer-Verlag New York (1991).
		
		
		\bibitem{MeherJindal2022} N. K. Meher and A. Jindal, \emph{Arithmetic density and new congruences for $3$-core partitions}, Preprint.
		
		
		
		\bibitem{Ono2004} K. Ono, \emph{The web of modularity: arithmetic of the coefficients of modular forms and $q-$series,} CBMS Regional Conference Series in Mathematics, $102,$ Amer. Math. Soc., Providence, RI, 2004.
		
		
		
		
		\bibitem{radu2009} S. Radu, \emph{An algorithmic approach to Ramanujan's congruences},   Ramanujan J. \textbf{20} (2009), 215-251.
		
		\bibitem{Radu2011} S. Radu and J. A. Sellers \emph{Congruences properties modulo $5$ and $7$ for the pod function, }   Int. J. Number
		Theory \textbf{7} (2011), 2249-2259.
		
		\bibitem{radu2011a} S. Radu and J. A. Sellers, \emph{Parity results for broken $k$-diamond partitions and $(2k+1)$-cores}, Acta Arith. \textbf{146} (2011), 43-52.
		
		
		\bibitem{Sage} The Sage Developers, \emph{sagemath, the Sage Mathematics Software System (Version 8.1).} https://www.sagemath.org
		
		\bibitem{Serre1974} J. -P. Serre, \emph{Divisibilit$\acute{e}$ des coefficients des formes modularies de poids entier,} C. R. Acad. Sci. Paris (A), \textbf{279} (1974), 679-682.
		
		\bibitem{Serre1974a} J. -P. Serre, \emph{Divisibilit$\acute{e}$ de certaines fonctions arithm$\acute{e}$tiques}, in: S$\acute{e}$minaire Delanga-Pisot-Poitou, Th$\acute{e}$orie Nr., \textbf{16} (1974), 1-28.
		
		\bibitem{wang2017} L. Wang, \emph{Arithmatic properties of $(k,l)$-regular bipartitions}, Bull. Aust. Math. Soc. \textbf{95} (2017), 353-364.
		
		\bibitem{Ajit2021} A. Singh and R. Barman, \emph{Certain eta-quotients and arithmetic density of Andrews' singular overpartitions}, J. Number Theory \textbf{229} (2021), 487-498.
		
		\bibitem{Tate1994} J. Tate, \emph{Extensions of $\mathbb{Q}$ un-ramified outside $2$}, in: Arithmetic Geometry: Conference on Arithmetic Geometry with an Emphasis on Iwasawa Theory, Arizona State University, March $15-18,$ 1993, Vol. 174, No. 174, American Mathematical Society, Providence, 1994.
	\end{thebibliography}
\end{document}